\newtheorem{ut}{Theorem}[section]
\newtheorem{uc}[ut]{Corollary}
\newtheorem{ul}[ut]{Lemma}
\newtheorem{uq}{Question}
\newtheorem{ucl}[ut]{Claim}
\theoremstyle{remark}
\newtheorem{ur}{Remark}[section]
\theoremstyle{definition}
\newtheorem{ue}[ut]{Example}
\DeclareMathOperator{\cl}{cl}
\DeclareMathOperator{\nc}{nc}
\title{Compactification of cut-point spaces}
\begin{document}
\author{David S. Lipham}
\address{Department of Mathematics \& Statistics, Auburn University at Montgomery, Montgomery 
AL 36117, United States of America}
\email{dsl0003@auburn.edu; dlipham@aum.edu}
\subjclass[2010]{54F15, 54F65} 
\keywords{cut point, compactification, continuum, dendron, dendrite, weakly orderable, ordered, irreducible}

%\renewenvironment{abstract}
% {\footnotesize
%  \list{}{%
%    \setlength{\leftmargin}{1.3cm}% <---------- CHANGE HERE
%    \setlength{\rightmargin}{\leftmargin}%
%  }%
 % \item\relax}
 %{\endlist}
\begin{abstract}% \vspace{-.3cm}
%\textsc{Abstract.} 
We show that if $X$ is a separable locally compact Hausdorff connected space with fewer than $\mathfrak c$ non-cut points, then $X$ embeds into a dendrite $D\subseteq \mathbb R ^2$, and the set of non-cut points of $X$  is a nowhere dense $G_\delta$-set. %This leads topological characterization of $\mathbb R$.  
We then prove a Tychonoff cut-point space $X$ is weakly orderable if and only if $\beta X$ is an irreducible continuum. %In this event, the weak ordering of $X$ continuously orders the internal layers of $\beta X$.  We also obtain irreducible compactifications of $X$ having the same weight   
%  as $X$. 
Finally, we show  every separable metrizable cut-point space densely embeds into a reducible continuum with no cut points. By contrast, there is a Tychonoff cut-point space each of whose compactifications has the same cut point.  The example raises some questions about persistent cut points in Tychonoff spaces.\end{abstract}
\maketitle

\section{Introduction}

Let $X$ be a connected topological space.  A point $x\in X$ is called a \textit{cut point} if $X\setminus \{x\}$ is disconnected.  If $X\setminus \{x\}$ has exactly two connected components, then $x$ is a \textit{strong cut point}. A \textit{cut-point space} (resp. \textit{strong cut-point space}) is a connected topological space in which every point is a cut point (resp. strong cut point).

In 1936, L.E. Ward  \cite{warr} famously proved: If $X$ is a  connected and  locally connected separable metrizable space in which every point is a strong cut point, then $X$ is homeomorphic to the real line.  In 1970, S.P. Franklin and G.V. Krishnarao strengthened Ward's result by showing  the word ``metrizable'' could be replaced with ``regular'' \cite{fra1}.  Then, in a short addendum  \cite{fra}   they claimed: \textit{If $X$ is a separable locally compact Hausdorff connected space in which every point is a strong cut point, then $X$ is homeomorphic to the real line.}  A mistake in the proof  was discovered in 1977 by A.E. Brouwer, who then re-proved the statement \cite[Theorem 8]{bro}.  More generally, Brouwer  showed  every separable locally compact Hausdorff cut-point space embeds into a dendrite \cite[Theorems 5 \& 7]{bro}. In Section 2 of this paper, we prove a stronger result using L.E. Ward's 1988  characterization of dendrons. 

\begin{ut}\label{t2} If   $X$ is a connected separable locally compact Hausdorff space with fewer than $\mathfrak c=|\mathbb R|$ non-cut points, then:
\begin{enumerate}
\item[\textnormal{(i)}] $X$ embeds into a dendrite whose cut points  are precisely the cut points of $X$; and 
\item[\textnormal{(ii)}] the set of non-cut points of $X$ is a nowhere dense $G_\delta$-set. 
\end{enumerate}\end{ut}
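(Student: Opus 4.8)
The plan is to deduce both parts from a single embedding of $X$ into a dendrite, and then to extract all the fine structure by passing to the closure $D' = \overline{X}$ inside that dendrite. I would first concentrate on producing \emph{some} dendrite $D$ with $X \subseteq D$; granting this, set $D' = \overline{X}^{D}$. Since a subcontinuum of a dendrite is again a dendrite, $D'$ is a dendrite in which $X$ is dense. The point of working with $D'$ rather than $D$ is that the cut-point correspondence demanded by (i) then becomes automatic, as explained below.

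To build the embedding I would use the cut-point structure of $X$ together with L.~E.~Ward's 1988 characterization of dendrons. Because fewer than $\mathfrak c$ points of $X$ are non-cut points, the separation order coming from the cut points is defined on all but a small set, and any two points are joined by a unique ``arc'' (the set of points separating them, suitably ordered); separability forces each such arc to be a metrizable ordered continuum, ruling out long-line phenomena. I would then form a compactification $\hat X$ of $X$ by adjoining the ends/limits of this tree order, verify via Ward's criterion that $\hat X$ is a dendron (every two points are separated by a third), and use separability to conclude that the dendron $\hat X$ is metrizable, hence a dendrite; finally every dendrite embeds in $\mathbb R^2$. \textbf{This construction---producing the dendron compactification and checking Ward's hypotheses while the ${<}\,\mathfrak c$ non-cut points are absorbed as endpoints without manufacturing new cut points---is the step I expect to be the main obstacle.}

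Granting $X \subseteq D'$ with $D'$ a dendrite and $X$ dense, the rest is comparatively clean. The key observation is that $X$ is \emph{arc-convex}: if $p,q \in X$ and $c$ were an interior point of the arc $[p,q]_{D'}$ with $c \notin X$, then removing $c$ would split $D'$ into two pieces with $p$ and $q$ on opposite sides, whence $X \subseteq D' \setminus \{c\}$ would be disconnected---contradicting connectedness of $X$. Thus $[p,q]_{D'} \subseteq X$ for all $p,q \in X$. From this and density I would verify both inclusions: an interior point of any arc is a cut point of $X$, and conversely a cut point of $X$ lies on some arc, so the cut points of $X$ are exactly the non-endpoints of $D'$; moreover any cut point of $D'$ lies on an arc between two points of $X$ and so belongs to $X$, while $D' \setminus X$ consists only of endpoints of $D'$. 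Hence the cut points of the dendrite $D'$ are \emph{precisely} the cut points of $X$, giving (i).

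For (ii), note first that $N = X \cap E(D')$, where $E(D')$ is the endpoint set of $D'$; since the endpoints of a dendrite form a $G_\delta$, $N$ is a $G_\delta$ in $X$. Local compactness now enters decisively: a locally compact dense subspace of a Hausdorff space is open, so $X$ is open in $D'$. Suppose $N$ fails to be nowhere dense, so that $N$ is dense in some nonempty set $W$ that is open in $X$, hence open in $D'$ and contained in $X$. Then $E(D') \supseteq N$ is dense in $W$, so $E(D') \cap W$ is a dense $G_\delta$ in the perfect Polish space $W$ and therefore has cardinality $\mathfrak c$. But $W \subseteq X$ forces $E(D') \cap W \subseteq X \cap E(D') = N$, whence $|N| \ge \mathfrak c$, contradicting the hypothesis. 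Therefore $N$ is nowhere dense, completing (ii).
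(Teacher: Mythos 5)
Your second half is fine, but the proposal has a genuine gap exactly where you flagged it: the construction of the dendrite is not a preliminary to be ``granted'' --- it is the entire content of the theorem, and your sketch of it would not go through as written. Before the theorem is proved you do not know that $X$ is dendritic, so ``the set of points separating $p$ from $q$, suitably ordered'' is not available as a working notion of arc: a priori two points of $X$ need not be separated by any third point at all, and the set of separating points need not be linearly ordered, connected, or compactifiable into a Hausdorff arc by ``adjoining ends.'' Moreover, the hypothesis $|\nc(X)|<\mathfrak c$ must enter the construction in an essential, quantitative way, and your sketch never says where. Finally, the criterion you attribute to Ward (``every two points are separated by a third'') is just the definition of a dendron; Ward's 1988 theorem is a different statement --- roughly, that a Hausdorff continuum is a dendron provided every nondegenerate subcontinuum $K$ of a nondegenerate subcontinuum $L$ contains a cut point of $L$ --- and it can only be applied once you already have a compact space in hand.

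The paper closes this gap by starting from a canonical compactification instead of building one from the order structure. Let $D$ be the Freudenthal compactification of $X$ (this is where local compactness is used). Its two defining properties do all the work: first, disjoint closed subsets of $X$ with compact boundaries have disjoint closures in $D$, which shows every cut point of $X$ remains a cut point of $D$ (Claim \ref{35}); second, the remainder $D\setminus X$ is compact and zero-dimensional, so every nondegenerate subcontinuum $K\subseteq D$ meets $X$ in a nonempty relatively open set, hence $|K\cap X|\geq\mathfrak c$, hence $K$ contains uncountably many cut points of $X$ --- this is precisely where $|\nc(X)|<\mathfrak c$ enters. If none of these points cut a subcontinuum $L\supseteq K$, one manufactures uncountably many pairwise disjoint nonempty open subsets of $D$, contradicting separability (Claim \ref{32}). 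That verifies Ward's actual criterion, so $D$ is a dendron, and Eberhart's theorem (separable dendrons are metrizable) makes it a dendrite. Your derivation of the cut-point correspondence in (i) from density and arc-convexity, and your Baire-category/cardinality argument for (ii), are correct and essentially match what the paper does once the dendrite exists; but without the Freudenthal step (or a substitute of comparable substance) the proposal does not prove the theorem.
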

\noindent Every dendrite embeds into Wazewski's plane continuum \cite[10.37]{nad} (see Figure 1), so in fact the set $X$ in Theorem 1.1 embeds into a dendrite in the plane.  The result also implies that every separable Hausdorff continuum with only countably many non-cut points is a (plane) dendrite.

%He also gave simple examples to  show  that ``locally compact'' cannot be weakened to ``rim-compact'', and word ``separable'' is necessary  \cite[p.69]{bro}.

%In Section 2 of the present paper, we will explain the mistake  and  present two simple counterexamples to the statement used in \cite{fra}. The first example also shows that  ``locally compact'' cannot be weakened to ``rim-compact''. The second shows the word ``separable'' is necessary.

%In Section 3 we  will re-prove Theorem \ref{t1} via a more general result. 

%First, we use L.E. Ward's 1988  characterization of dendrons  \cite{warrr} to show the Freudenthal compactification of a separable locally compact Hausdorff cut-point space is a dendron (or \textit{tree}  as defined in \cite{warrr}).  This shows, in particular, that each space  $X$ which satisfies the hypothesis of Theorem \ref{t1} is locally connected. Moreover, every separable dendron is a dendrite \cite{eb}, and every dendrite embeds into Wazewski's plane continuum \cite[10.37]{nad} (see Figure 1). We thus obtain:

%\begin{ut}\label{t2}Every  separable locally compact Hausdorff cut-point space  is homeomorphic to the set of cut points of a dendrite in the plane.  \end{ut}

%\begin{ut}\label{t2}The Freudenthal compactification of a separable locally compact Hausdorff cut-point space is a dendrite. \end{ut}

%\noindent Theorem 1.1 is a consequence of Theorem 1.2. 

%We conclude Section 3 with a brief generalization (Corollary \ref{gen}): 

In Sections 3 and 4, we focus on non-dendritic compactifications of Tychonoff cut-point spaces, including weakly ordered spaces. 

A space $X$ is \textit{weakly orderable} if there exists a continuous linear ordering of the elements of $X$.  To be more precise, $X$ is weakly orderable if there is a continuous one-to-one mapping of $X$ into a Hausdorff arc.  Apparently,  every connected weakly ordered space is a strong cut-point space. 

In Section 3 we show that a connected Tychonoff space $X$ is weakly orderable if and only if $X$ is a cut-point space and $\beta X$ is an irreducible Hausdorff continuum (Corollary \ref{46}).  In this event, the Stone-\v{C}ech extension of the weak ordering epimorphism continuously orders the internal layers of $\beta X$.  We also show each connected weakly orderable normal space densely embeds into an irreducible Hausdorff continuum of the same weight  (Theorem \ref{47}).  This generalizes a result proved by Roman Duda in the separable metrizable setting \cite[Theorem 5]{dud}.

 We will see that each cut point of $X$ is a cut point of $\beta X$ (Theorem \ref{41}).  On the other hand, in Section 4 we show  every separable metrizable cut-point space densely embeds into a reducible metrizable continuum with no cut points (Theorem \ref{54}). An obvious example is the one-point compactification of $\mathbb R$.   %Our proof is a combination of the non-cut-point existence theorem for continua and a  compactification theorem of Jan J. Dijkstra.  
  A locally connected fan of long lines shows this type of embedding is not possible for all  Tychonoff cut-point spaces (see Example \ref{ex2}).  
 \begin{figure}[h]
  \centering
   \includegraphics[scale=.5]{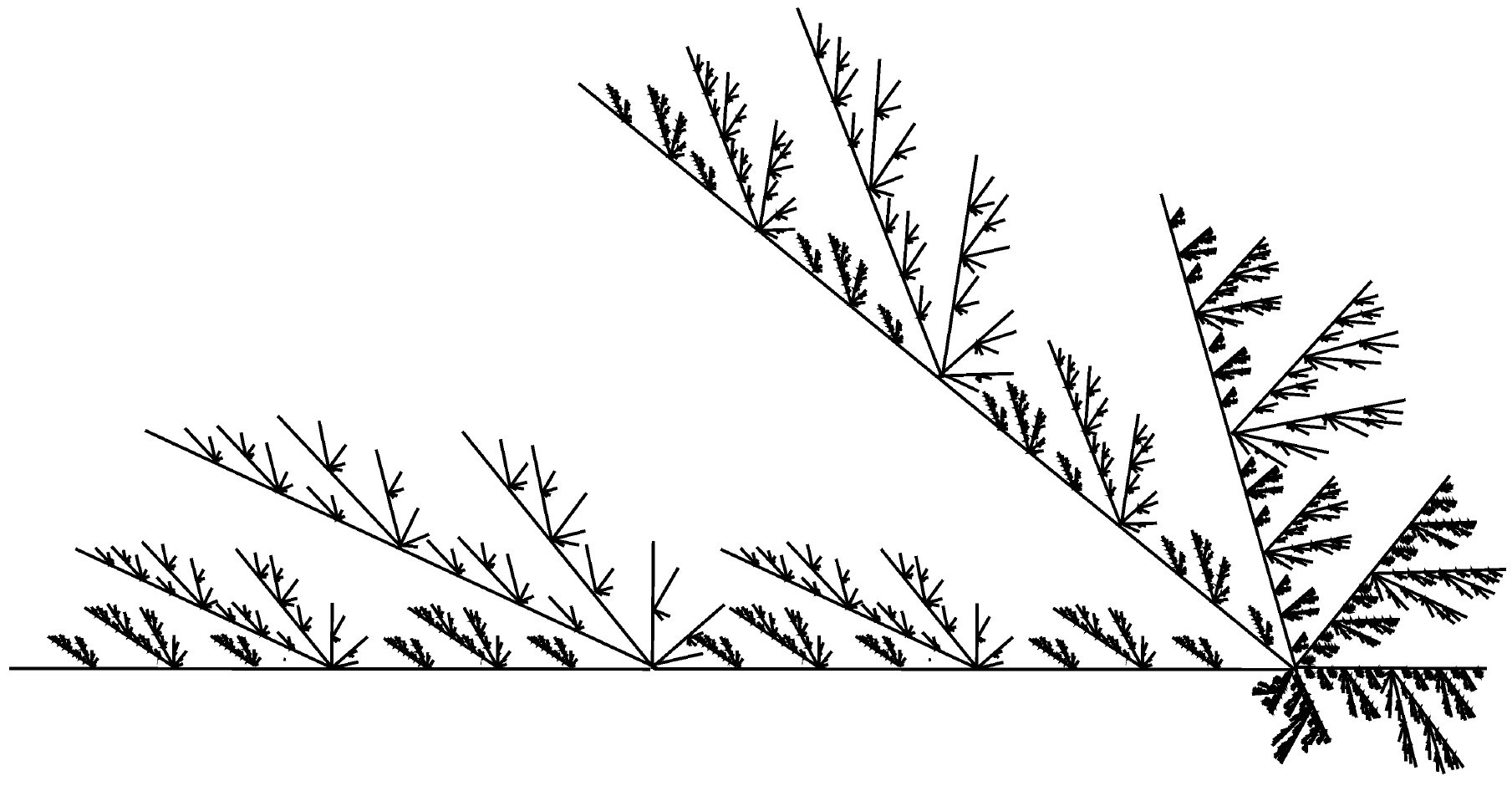}
   \caption{Universal plane dendrite}
 \end{figure} 
\subsection{Terminology} 

A  \textit{continuum} is a connected compact Hausdorff space.   An \textit{arc} is a continuum homeomorphic to the interval $[0,1]$.  A \textit{Hausdorff arc} is a linearly ordered (Hausdorff) continuum.

A  connected space $X$ is \textit{dendritic} if every two points are separated by some other point.  
Two points $a$ and $b$ are \textit{separated by} a third point $c$ if $X\setminus \{c\}$ is the union of two disjoint open sets, one containing $a$ and the other containing $b$. A \textit{dendron} is a  dendritic compact Hausdorff space, and a \textit{dendrite} is a metrizable dendron.

A topological space $X$  is \textit{connected im-kleinen} at $x\in X$ provided $x$ has  arbitrarily small connected neighborhoods. If $X$ is connected im-kleinen at each of its points, then  $X$ is  \textit{locally connected}.% provided it is connected im-kleinen at each of its points.%; this is equivalent to saying $X$ is connected im-kleinen at each of its points.

A continuum $X$ \textit{indecomposable} if every proper subcontinuum of $X$ is nowhere dense. A continuum  $X$ is \textit{reducible} if for every two points $a,b\in X$ there exists a proper subcontinuum of $X$ containing $a$ and $b$.   If no proper subcontinuum of $X$ contains both $a$ and $b$, then $X$ is \textit{irreducible between $a$ and $b$}.  A continuum which is irreducible between some two of its points is said to be \textit{irreducible}.

A \textit{compactification} of a Tychonoff space $X$ is a compact Hausdorff space which has a dense subspace homeomorphic to $X$. $\beta X$ denotes the \textit{Stone-\v{C}ech compactification} of $X$. 

Each locally compact Hausdorff space $X$ has a compactification  $\gamma X$ such that the remainder $\gamma X\setminus X$ is zero-dimensional,  and disjoint closed subsets of $X$ with compact boundaries have disjoint closures in $\gamma X$. The canonical compactification of $X$ with these properties is called the \textit{Freudenthal compactification} of $X$.

\section{Proof of Theorem 1.1}

Let $X$ be a connected separable locally compact Hausdorff space. Let $\nc(X)$ denote the set of non-cut points of $X$.  Suppose $|\nc(X)|<\mathfrak c$.

Let $D$ be the Freudenthal compactification of $X$.

\begin{ucl}\label{35}Every point in $X$ is a cut point of $D$.\end{ucl}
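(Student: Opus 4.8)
The plan is to produce, for each point $x\in X$, an explicit separation of $D\setminus\{x\}$ built from a separation of $X\setminus\{x\}$, using the defining property of the Freudenthal compactification to keep the two pieces apart out in the remainder $D\setminus X$. Since $X$ is connected and nondegenerate, $x$ is not isolated, so $X\setminus\{x\}$ is dense in $X$ and hence in $D$; thus once I have a clopen decomposition $X\setminus\{x\}=U\cup V$ into nonempty sets, their closures in $D$ will automatically cover all of $D$. The task then reduces to showing these closures meet only at $x$.

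First I would fix the separation $X\setminus\{x\}=U\cup V$ coming from $x$ cutting $X$ and trim it by a compact neighborhood of $x$. By local compactness choose a compact neighborhood $K$ of $x$ and set $W=\operatorname{int}_X K$. The sets $U\setminus W$ and $V\setminus W$ are disjoint, and each is closed in $X$ with compact boundary: since $\overline U^{\,X}\subseteq U\cup\{x\}$ and $x\in W$, a short computation gives $\overline{U\setminus W}^{\,X}\subseteq U\setminus W$, so $U\setminus W$ is closed, and any boundary point must lie in $\overline W\setminus W=\partial W\subseteq K$, a compact set. The same holds for $V\setminus W$. Applying the Freudenthal property to these disjoint closed sets with compact boundaries yields
\[
\overline{U\setminus W}^{\,D}\cap\overline{V\setminus W}^{\,D}=\varnothing .
\]

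Next I would transfer this separation from the trimmed sets back to $U$ and $V$. Because $K$ is compact, every remainder point is a limit only of nets that eventually leave $K$, so $D\setminus X\subseteq\overline{X\setminus W}^{\,D}=\overline{U\setminus W}^{\,D}\cup\overline{V\setminus W}^{\,D}$; moreover a remainder point that is a limit of points of $U$ must fall in $\overline{U\setminus W}^{\,D}$, and symmetrically for $V$. Hence $\overline U^{\,D}\cap\overline V^{\,D}$ contains no point of the remainder. Combined with $\overline U^{\,D}\cap X=\overline U^{\,X}\subseteq U\cup\{x\}$ (and likewise for $V$), this forces $\overline U^{\,D}\cap\overline V^{\,D}=\{x\}$. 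Since $\overline U^{\,D}\cup\overline V^{\,D}=\overline{X\setminus\{x\}}^{\,D}=D$, the sets $\overline U^{\,D}\setminus\{x\}$ and $\overline V^{\,D}\setminus\{x\}$ are disjoint, nonempty, and each closed in $D\setminus\{x\}$; being complementary there, they are clopen, so $x$ separates $D$.

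The point-set verifications — closedness of $U\setminus W$ and compactness of its boundary — are routine bookkeeping. The crux, and the only place the Freudenthal compactification is genuinely used, is the remainder step: ruling out that the two halves $U$ and $V$ reconverge somewhere in $D\setminus X$. Note that trimming by $W$ is essential here, not cosmetic: the untrimmed closed hulls $\overline U^{\,X}=U\cup\{x\}$ and $\overline V^{\,X}=V\cup\{x\}$ are \emph{not} disjoint, so the separation property cannot be applied to them directly, whereas $U\setminus W$ and $V\setminus W$ are disjoint with boundaries squeezed into the compact set $\partial W$. Controlling this shared-limit-at-infinity phenomenon is what I expect to be the main obstacle, and it is exactly what the defining property of $D$ is designed to handle.
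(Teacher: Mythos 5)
Your proof is correct and takes essentially the same route as the paper's: fix the separation $X\setminus\{x\}=U\sqcup V$, trim by a relatively compact open neighborhood $W$ of $x$, apply the Freudenthal property to the disjoint closed sets $U\setminus W$ and $V\setminus W$ (which have compact boundaries), and reassemble a separation of $D\setminus\{x\}$. Your two complementary pieces $\overline{U}^{\,D}\setminus\{x\}$ and $\overline{V}^{\,D}\setminus\{x\}$ are in fact exactly the paper's explicit open sets $(U\cap W)\cup\overline{U\setminus W}$ and $(V\cap W)\cup\overline{V\setminus W}$ (closures in $D$), so the difference is only in the bookkeeping used to verify openness.
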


\begin{proof} Let $x\in X$, and write $X\setminus \{x\}=U\sqcup V$. Let $W$ be an open subset of $X$ such that $x\in W$ and $\overline W$ is compact. Then $U\setminus W$ and $V\setminus W$ are disjoint closed subsets of $X$ with compact boundaries.  Thus  $\overline{U\setminus W}\cap \overline{V\setminus W}=\varnothing$.  It follows that $D\setminus \{x\}$ is the union of the two disjoint open sets $(U\cap W)\cup \overline{U\setminus W}$ and $(V\cap W)\cup \overline{V\setminus W}$.\end{proof}

\begin{ucl}\label{32}For every two non-degenerate subcontinua $K,L\subseteq D$, if $K\subseteq L$ then $K$ contains a cut point of $L$.  \end{ucl}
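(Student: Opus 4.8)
The plan is to reduce the statement to a separation property of $X$ itself and then exploit the cardinality bound on $\nc(X)$. The starting observation is that if a point $c$ separates two points $a,b$ of $K$ inside $L$, then $c$ must lie in $K$: otherwise the connected set $K$ would be split by the separation of $L\setminus\{c\}$, which is impossible. Such a $c$ is then automatically a cut point of $L$ lying in $K$, so it suffices to produce two points of $K$ separated in $L$. Since any separation of $D$ restricts to $L$ whenever the separating point lies in $L$, and since a point separating two points of $K$ is forced into $K\subseteq L$, it further suffices to separate two points of $K$ inside $D$. Finally, invoking Claim \ref{35}, if some $p\in X$ separates two points $a,b\in K\cap X$ within $X$, then $p$ separates $a$ from $b$ in $D$; hence the whole problem reduces to the purely internal assertion that there exist $a,b\in K\cap X$ and a point of $X$ separating them in $X$.

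Next I would record two structural facts. First, because the Freudenthal remainder $D\setminus X$ is closed and zero-dimensional, $K\cap X$ is an open dense subset of the nondegenerate continuum $K$; and by the boundary bumping theorem every nonempty open subset of a nondegenerate continuum contains a nondegenerate subcontinuum, so $|K\cap X|\ge\mathfrak c$. Second, I would split $K\cap X$ according to the hypothesis on $\nc(X)$: at most $|\nc(X)|<\mathfrak c$ of its points are non-cut points of $X$, so the remaining $\ge\mathfrak c$ points of $K\cap X$ are genuine cut points of $X$ lying in $K$.

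The proof then proceeds by contradiction: assume no point of $X$ separates two points of $K\cap X$. For each of the $\ge\mathfrak c$ cut points $c\in K\cap X$, any separation $X\setminus\{c\}=P\sqcup Q$ must keep the dense set $K\cap X$ on one side (else two of its points would be separated), so one side, say $Q_c$, is a nonempty open subset of $X$ disjoint from $K$ while $c\in\overline{Q_c}$. Since $X$ is separable it is ccc, so it cannot contain uncountably many pairwise disjoint nonempty open sets; thus a contradiction will follow once the branches $\{Q_c\}$ are shown to contain an uncountable pairwise disjoint subfamily. I expect this disjointness to be the main obstacle: with local connectedness it is immediate, but in general one must use the separation order of the cut points of $X$ --- each $c'$ lies on the $K$-side of $c$, so the branches are nested or disjoint --- together with local compactness and the end structure of the Freudenthal compactification to rule out the nested case and extract an uncountable disjoint family, contradicting $|K\cap X|\ge\mathfrak c$. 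Taking $K=L$ in the resulting statement yields exactly the hypothesis of Ward's dendron characterization, which is how I would expect Claim \ref{32} to be used downstream.
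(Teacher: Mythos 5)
The skeleton of your argument (reduce to separating points of $K\cap X$ inside $X$, count at least $\mathfrak c$ points of $K\cap X$, discard the fewer than $\mathfrak c$ non-cut points, and contradict ccc-ness with an uncountable family of pairwise disjoint branches) is sound, and it is essentially the paper's strategy. But the one step you explicitly leave open --- pairwise disjointness of the branches $Q_c$ --- is the heart of the proof, so as written there is a genuine gap; moreover, the route you sketch for filling it (branches are ``nested or disjoint,'' then rule out nesting using local compactness and the end structure of the Freudenthal compactification) is misdirected, because no such machinery is needed: nesting cannot occur at all. Concretely, for distinct cut points $c,c'\in K\cap X$ of $X$, your own normalization puts $(K\cap X)\setminus\{c\}\subseteq P_c$, so $c'\in P_c$ and, symmetrically, $c\in P_{c'}$. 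The set $Q_c\cup\{c\}$ is connected (standard fact: if $Z$ is connected and $Z\setminus\{c\}=P\sqcup Q$ is a separation, then $Q\cup\{c\}$ is connected); it avoids $c'$ since $c'\in P_c$; and it meets $P_{c'}$ at the point $c$. Hence $Q_c\cup\{c\}$ is a connected subset of $X\setminus\{c'\}=P_{c'}\sqcup Q_{c'}$ meeting $P_{c'}$, so $Q_c\subseteq P_{c'}$ and therefore $Q_c\cap Q_{c'}=\varnothing$. This two-line argument is exactly what the paper does, only carried out in $D$ rather than in $X$: with $D\setminus\{x\}=U_x\sqcup V_x$ from Claim \ref{35} and the contradiction hypothesis forcing $L\setminus\{x\}\subseteq U_x$, one gets $V_x\cup\{x\}\subseteq U_y$ and hence $V_x\cap V_y=\varnothing$; working directly in $D$ also makes your first-paragraph reduction unnecessary.

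A smaller issue: your reduction uses the implication ``if $p\in X$ separates $a,b\in K\cap X$ in $X$, then $p$ separates $a$ from $b$ in $D$,'' attributed to Claim \ref{35}. The statement of Claim \ref{35} only asserts that each point of $X$ is a cut point of $D$; what you actually need is the construction in its proof, which shows that the separation $D\setminus\{p\}=\bigl[(U\cap W)\cup\overline{U\setminus W}\bigr]\sqcup\bigl[(V\cap W)\cup\overline{V\setminus W}\bigr]$ extends the given separation $X\setminus\{p\}=U\sqcup V$ side by side, so that $a\in U$ and $b\in V$ land in different pieces. This is easily checked, but it is an appeal to the proof, not to the statement.
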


\begin{proof}Suppose $K$ and $L$ are non-degenerate subcontinua of $D$ and $K\subseteq L$.   Since $D\setminus X$ is zero-dimensional and compact, $K\cap X$ is a non-empty open subset of $K$.   Every open subset of a continuum has cardinality at least $\mathfrak c$.  Hence  $|\nc(X)|<\mathfrak c$ implies $K$ contains uncountably many cut points of $X$.  And by Claim \ref{35}, for each $x\in K\cap X\setminus \nc(X)$ we can write $D\setminus \{x\}=U_x\sqcup V_x$.

For a contradiction, suppose $L\setminus \{x\}$ is connected for all $x\in K\cap X\setminus \nc(X)$. Then we  may assume $L\setminus \{x\}\subseteq U_x$.   For any two points $x\neq y\in K\cap X\setminus \nc(X)$ we have $V_x\subseteq U_y\cup V_y$ and $x\in U_y$.  The set $V_x\cup \{x\}$ is connected, therefore $V_x\cup \{x\}\subseteq U_y$ and $V_x\cap V_y=\varnothing$. Thus $\{V_x:x\in K\cap X\setminus \nc(X)\}$ is an uncountable collection of pairwise disjoint non-empty open subsets of $D$.  This contradicts the fact that $D$ is separable.   Therefore $K$ contains a cut point of $L$.\end{proof}

By Claim \ref{32} and \cite[Theorem 1]{warrr}, $D$ is a dendron. Separable dendrons are metrizable by \cite[Theorem I.5]{eb}. Thus $D$ is a dendrite. Clearly $D\setminus X$ contains no cut point of $D$, so by Claim \ref{35} $X$ is equal to the set of cut points of $D$.  This concludes our proof of Theorem \ref{t2}(i). 

%\medskip

Toward proving Theorem \ref{t2}(ii), note that the set of cut points of any dendrite is a countable union of arcs.  So by  part (i), $\nc(X)$ is a $G_\delta$-subset of $X$. Hence $|\nc(X)|<\mathfrak c$ implies $X$ is scattered (and countable). Every open subset of $X$ is perfect, so $\nc(X)$ is nowhere dense. This concludes the proof of Theorem \ref{t2}(ii).

\begin{uc}\label{pp}Every separable Hausdorff continuum with only countably many non-cut points is a dendrite.\end{uc}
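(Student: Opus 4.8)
The plan is to derive this straight from Theorem \ref{t2}, the point being that the dendrite produced there is the Freudenthal compactification of $X$, which collapses onto $X$ itself once $X$ is already compact. First I would verify the hypotheses: a separable Hausdorff continuum is by definition a connected compact Hausdorff space, hence in particular a connected separable locally compact Hausdorff space, and the assumption of only countably many non-cut points gives $|\nc(X)|\le\aleph_0<\mathfrak c$. Thus Theorem \ref{t2} applies to $X$.

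The key observation is that the dendrite $D$ built in the proof of Theorem \ref{t2}(i) is precisely the Freudenthal compactification of $X$, in which $X$ embeds as a \emph{dense} subspace. Since $X$ is compact and $D$ is Hausdorff, the image of $X$ is closed in $D$; being simultaneously dense, it must coincide with all of $D$. Therefore $X=D$ is a dendrite, which is exactly the assertion. One can also phrase this without reference to the internal construction, simply by invoking that a compact Hausdorff space is its own Freudenthal compactification, so $D=X$ from the outset.

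I do not anticipate any genuine difficulty: the substance is carried entirely by Theorem \ref{t2}, and the corollary amounts to noticing that compactness removes the gap between $X$ and its compactification. The only step requiring care is confirming that the embedding furnished by Theorem \ref{t2}(i) is dense rather than merely topological, so that compactness forces the equality $X=D$ instead of a proper inclusion; this density is immediate from the fact that $D$ arises as a compactification of $X$.
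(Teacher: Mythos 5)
Your proposal is correct and is essentially the paper's own (implicit) derivation: the corollary is stated without separate proof precisely because the dendrite $D$ constructed in Section 2 is the Freudenthal compactification of $X$, and a compact Hausdorff space coincides with any of its compactifications, so $X=D$. Your density-plus-closedness argument, and your shortcut that a compact space is its own Freudenthal compactification, both capture exactly this point.
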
 

%The 

\section{Weakly ordered Tychonoff spaces}

In this section we show connected weakly ordered Tychonoff spaces are precisely those cut-point spaces which can be densely embedded into irreducible continua.  These include graphs of certain functions defined on the real line.  For a non-trivial example, let $\varphi(t)=\sin(1/t)$ for $t\in \mathbb R\setminus \{0\}$ and put $\varphi(0)=0$. Now let $\mathbb Q =\{q_n:n<\omega\}$ be an enumeration of the rationals, and define $f:\mathbb R \to [0,1]$  by $f(t)=\sum_{n=1}^\infty \varphi(t-q_n)\cdot 2^{-n}.$  The graph $X:=\{\langle t,f(t)\rangle:t\in \mathbb R\}$  is connected, and the elements of $X$ are ordered by the first coordinate projection. This example is due to Kuratowski and Sierp\-i\'n\-ski \cite{ks}. More generally, for every $n\leq \omega$  Duda \cite[Theorem 6]{dud}  constructed a  function $f:\mathbb R\to [0,1]^n$  whose graph is  $n$-dimensional and connected. 

%An irreducible continuum $Y$ between $y_0$ and $y_1$ is \textit{$\lambda$-type} if there exists Hausdorff arc $A=[a,b]$ and a continuous mapping $\lambda: Y\to A$ such that $\lambda(y_0)=a$, $\lambda(y_1)=b$, and each $\lambda^{-1}\{t\}$ is a maximal nowhere dense subcontinuum of $Y$. With $X$ defined as above, let  $Y:=\cl_{[-\infty,\infty]^2}X$, and let   $\lambda:Y\to \mathbb R\cup \{\pm\infty\}$ be the extended first coordinate projection. We see $Y$ is an irreducible $\lambda$-type continuum  with exactly $\omega$-many non-trivial layers, which are vertical arcs of the form $\lambda^{-1}\{q_n\}$. 

To prove the first two results, we need   the following fact: 
\renewenvironment{quote}{%
   \list{}{%
     \leftmargin1cm   % this is the adjusting screw
     \rightmargin\leftmargin
   }
   \item\relax
}
{\endlist}
\begin{quote}
If  $U$ and $V$ are disjoint open subsets of a Tychonoff space $X$, and $W$ is an open subset of $\beta X$ such that $W\cap X=U\cup V$, then the sets $W\cap \cl_{\beta X}U$ and $W\cap \cl_{\beta X} V$ are disjoint $\beta X$-open sets unioning to $W$.  
\end{quote}
Proofs may be found in the proofs of \cite[Lemma 1.4]{walk} and  \cite[Theorem 4]{lip}.

\begin{ut}\label{41}If $X$ is a connected Tychonoff space, then every cut point of $X$ is a cut point of $\beta X$.\end{ut}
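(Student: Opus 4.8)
The plan is to leverage the boxed fact stated just before the theorem. Let $x$ be a cut point of $X$, so $X\setminus\{x\}=U\sqcup V$ with $U,V$ nonempty disjoint open subsets of $X$. Since $X$ is dense in $\beta X$ and $x$ is the only point of $X$ not in $U\cup V$, the set $\beta X\setminus\{x\}$ is an open subset of $\beta X$ whose intersection with $X$ is exactly $U\cup V$. The natural candidate is to take $W=\beta X\setminus\{x\}$ and apply the quoted fact directly. First I would verify that $W$ is open in $\beta X$: this holds because singletons are closed in the Hausdorff space $\beta X$. Then $W\cap X=(X\setminus\{x\})=U\cup V$, so the hypotheses of the boxed fact are met.

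The main step is then immediate from the fact: it gives that $W\cap\cl_{\beta X}U$ and $W\cap\cl_{\beta X}V$ are disjoint $\beta X$-open sets whose union is $W=\beta X\setminus\{x\}$. Both pieces are nonempty, since $U\subseteq W\cap\cl_{\beta X}U$ and $V\subseteq W\cap\cl_{\beta X}V$. Hence $\beta X\setminus\{x\}$ is disconnected, which is exactly the statement that $x$ is a cut point of $\beta X$.

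I expect the whole argument to be short, so the only genuine obstacle is a bookkeeping point I would need to confirm: that $x$ really is a point of $\beta X$ (i.e., identifying $X$ with its image in $\beta X$ and noting $x$ survives as a point there) and that $W\cap X$ equals $U\cup V$ rather than a larger set. Both are clear once we observe that the embedding $X\hookrightarrow\beta X$ is a topological embedding and $x\in X$, so $x$ corresponds to a genuine point of $\beta X$ and no other point of $X$ is removed when we delete $\{x\}$. With those identifications in place, the boxed fact does all the work, and no separate verification that the two closures are disjoint is needed, since disjointness is part of the conclusion of the quoted fact.
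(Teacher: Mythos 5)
Your proposal is correct and is essentially identical to the paper's own proof: both take $W=\beta X\setminus\{x\}$ and apply the quoted fact to get the disjoint open decomposition $W\cap\cl_{\beta X}U$ and $W\cap\cl_{\beta X}V$ (which the paper writes as $[\cl_{\beta X}U]\setminus\{x\}$ and $[\cl_{\beta X}V]\setminus\{x\}$). Your added checks that $W$ is open, that $W\cap X=U\cup V$, and that both pieces are nonempty are exactly the routine verifications the paper leaves implicit.
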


\begin{proof}Suppose $x\in X$ is a cut point. Write $X\setminus \{x\}=U\sqcup V$.  Let $W=\beta X\setminus \{x\}$. By the fact above, $\beta X\setminus \{x\}$ is the union of two disjoint open sets $[\cl_{\beta X} U]\setminus \{x\}$ and $[\cl_{\beta X} V]\setminus \{x\}$. \end{proof}

\begin{ut}\label{42}If $X$ is a connected weakly orderable Tychonoff space, then $\beta X$ is an irreducible continuum.\end{ut}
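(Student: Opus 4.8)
The plan is to use the weak ordering to map $\beta X$ continuously onto a Hausdorff arc, and then to locate two points sitting over the two ends of that arc. By hypothesis there is a continuous injection $f\colon X\to A$ into a Hausdorff arc $A$. Since $A$ is a continuum, hence compact Hausdorff, $f$ extends to a continuous map $\beta f\colon\beta X\to A$. Now $\beta X$ is a continuum, being the closure of the connected dense subset $X$ in the compact Hausdorff space $\beta X$, so $\beta f(\beta X)$ is a subcontinuum of the arc $A$, that is, a closed interval $[a,b]$; as $f$ is injective and $X$ is nondegenerate (the one-point case being trivial), $a\neq b$. I would then fix points $p\in(\beta f)^{-1}(a)$ and $q\in(\beta f)^{-1}(b)$ and claim that $\beta X$ is irreducible between $p$ and $q$.

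To prove the claim I would argue by contradiction: suppose a proper subcontinuum $K\subsetneq\beta X$ contains both $p$ and $q$. Then $\beta f(K)$ is a subcontinuum of $[a,b]$ containing the two endpoints, so $\beta f(K)=[a,b]$. The idea is to contradict this with a single well-chosen point of $X$ lying outside $K$. Since $K$ is proper and $X$ is dense, $X\setminus K=X\cap(\beta X\setminus K)$ is a nonempty open subset of $X$; as $X$ is connected Hausdorff with more than one point it has no isolated points, so $X\setminus K$ is infinite. Because $f$ is injective, at most one point of $X$ maps to $a$ and at most one to $b$, so at most two points of $X$ are $\prec$-extreme (where $\prec$ is the order pulled back from $A$ by $f$). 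Hence I may choose $z\in X\setminus K$ that is neither the $\prec$-minimum nor the $\prec$-maximum.

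For such a $z$ the order-rays $U_z=\{x\in X:f(x)<f(z)\}$ and $V_z=\{x\in X:f(x)>f(z)\}$ are nonempty disjoint open sets with $X\setminus\{z\}=U_z\sqcup V_z$, so $z$ is a cut point of $X$. By the proof of Theorem \ref{41}, the set $\beta X\setminus\{z\}$ splits into the disjoint open sets $\cl_{\beta X}U_z\setminus\{z\}$ and $\cl_{\beta X}V_z\setminus\{z\}$. Since $z\notin K$ and $K$ is connected, $K$ lies in one of them, say $K\subseteq\cl_{\beta X}U_z$. Applying $\beta f$ and using that it carries $\cl_{\beta X}U_z$ onto $\cl_A f(U_z)$ together with $f(U_z)\subseteq[a,f(z))$, I obtain $\beta f(K)\subseteq[a,f(z)]$. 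But $z$ is not $\prec$-maximal, so $f(z)<b$, which contradicts $b\in\beta f(K)$; the case $K\subseteq\cl_{\beta X}V_z$ is symmetric. Thus no such $K$ exists, and $\beta X$ is irreducible between $p$ and $q$.

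I expect the main obstacle to be exactly the non-injectivity of $\beta f$: because $X$ need not be locally compact, the fibers of $\beta f$ over interior points of $[a,b]$ may be non-degenerate, so a priori a proper subcontinuum $K$ could still map onto all of $[a,b]$, and one cannot read off irreducibility from the order map alone. The device that overcomes this is Theorem \ref{41}: the cut-point decomposition at an interior point $z\notin K$ confines the connected set $K$ to one side of $z$, and the order then caps $\beta f(K)$ strictly below $b$ (or strictly above $a$). The remaining order-topological facts — that $\beta f(\beta X)$ is a closed interval, that $X$ is crowded, and that $\beta f(\cl_{\beta X}U_z)=\cl_A f(U_z)$ — are routine, and the only genuinely delicate point is securing the interior witness $z$, which is handled by the crowdedness of $X$ together with the injectivity of $f$ at the two endpoints.
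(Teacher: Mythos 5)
Your proof is correct and follows essentially the same route as the paper's: extend the order map to $\beta f\colon\beta X\to A$, pick $p$ and $q$ over the two ends, and use the Henriksen--Isbell splitting of $\beta X\setminus\{z\}$ (via Theorem \ref{41}) at a cut point $z$ induced by the order, so that a subcontinuum containing $p$ and $q$ cannot avoid any non-extreme point of $X$. The paper phrases this directly (every such point lies in $K$, so $K$ is dense and closed, hence all of $\beta X$) and works inside the canonical order compactification where $Y\setminus X$ consists of at most the two endpoints, whereas you argue by contradiction through the image $\beta f(K)$; the difference is purely organizational.
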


\begin{proof}Let $X$ be a connected weakly ordered Tychonoff space.  Let $Y$ be a Hausdorff arc compactification of $X$ in the weak order topology. Let $y_0$ and $y_1$ be the endpoints of $Y$, and note that  $Y\setminus X\subseteq \{y_0,y_1\}$.  Let $f:X\hookrightarrow Y$ be the identity, and let $\beta f:\beta X\to Y$ be the Stone-\v{C}ech extension of $f$. Then there exist $p\in \beta f^{-1}\{y_0\}$ and $q\in \beta f^{-1}\{y_1\}$. We claim $\beta X$ is irreducible between $p$ and $q$.  

Let $K$ be any subcontinuum of $\beta X$ containing $p$ and $q$.  We show $X\setminus \{y_0,y_1\} \subseteq K$.  Let $x\in (y_0,y_1)$.  Take $U=f^{-1}[y_0,x)$ and $V=f^{-1}(x,y_1]$ and $W=\beta X\setminus \{x\}$.  By the fact above, $[\cl_{\beta X}U]\setminus \{x\}$ and $[\cl_{\beta X} V]\setminus \{x\}$ are disjoint $\beta X$-open sets covering $\beta X\setminus \{x\}$.   Since $p\in \cl_{\beta X}U$, $q\in \cl_{\beta X}V$, and $K$ is connected, we have $x\in K$. Thus $X\setminus \{y_0,y_1\}\subseteq K$.  So $K$ contains a dense subset of $\beta X$, therefore $K=\beta X$. 
\end{proof}

 \begin{ul}\label{43}Let $X$ be a cut point space.  For all $x_0,x_1\in X$ there are three disjoint  non-empty open sets $U$, $W$ and $V$ such that $X\setminus \{x_0,x_1\}=U\cup W\cup V$.
\end{ul}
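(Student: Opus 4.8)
The plan is to read the three pieces off the two cut separations and then prove that the middle piece is nonempty, which is the only substantive point. Since $x_0$ is a cut point, write $X\setminus\{x_0\}=A\sqcup B$ as a union of two disjoint nonempty relatively open sets, labelled so that $x_1\in A$; symmetrically write $X\setminus\{x_1\}=C\sqcup D$ with $x_0\in C$. As singletons are closed in the (Hausdorff) spaces under consideration, $X\setminus\{x_0\}$ and $X\setminus\{x_1\}$ are open in $X$, so $A,B,C,D$ are open in $X$; and because each side is also closed in its punctured space one gets $\cl A\subseteq A\cup\{x_0\}$, $\cl B\subseteq B\cup\{x_0\}$, $\cl C\subseteq C\cup\{x_1\}$ and $\cl D\subseteq D\cup\{x_1\}$ (closures in $X$). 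I will also invoke the standard fact that the union of a cut point with either side of its cut is connected.

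First I would show the two outer sides are disjoint. The set $B\cup\{x_0\}$ is connected and avoids $x_1$, so it is a connected subset of $X\setminus\{x_1\}=C\sqcup D$; as it meets $C$ (at $x_0$) it lies wholly in $C$, whence $B\subseteq C$ and $B\cap D=\varnothing$. The symmetric argument applied to $D\cup\{x_1\}\subseteq X\setminus\{x_0\}=A\sqcup B$ gives $D\subseteq A$. Feeding these into the refinement $X\setminus\{x_0,x_1\}=(A\sqcup B)\cap(C\sqcup D)$ makes three of its four blocks collapse ($B\cap D=\varnothing$, $B\cap C=B$, $A\cap D=D$), leaving
\[ X\setminus\{x_0,x_1\}=B\ \sqcup\ (A\cap C)\ \sqcup\ D. \]
Thus the natural candidates are $U=B$, $W=A\cap C$ and $V=D$: these are pairwise disjoint and open, and $U,V$ are nonempty.

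The main obstacle is to rule out that $W=A\cap C$ is empty, i.e.\ that $x_0$ and $x_1$ are ``adjacent'' with nothing between them; this is exactly where connectedness of $X$ must enter. I would argue by contradiction. If $A\cap C=\varnothing$ then, since $D\subseteq A$ and $x_1\in A$, the side $A$ collapses to $A=D\cup\{x_1\}$. Taking closures and using $\cl D\subseteq D\cup\{x_1\}$ and $\cl\{x_1\}=\{x_1\}$ gives $\cl A\subseteq D\cup\{x_1\}=A$, so $A$ is closed; but $A$ is also open and is a proper nonempty subset of $X$ (it omits $x_0$), contradicting connectedness. Hence $A\cap C\neq\varnothing$, and $U=B$, $W=A\cap C$, $V=D$ are three disjoint nonempty open sets with union $X\setminus\{x_0,x_1\}$. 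Once $B\subseteq C$ and $D\subseteq A$ are established the disjointness and openness are pure bookkeeping; the entire weight of the lemma sits in the nonemptiness of the middle block.
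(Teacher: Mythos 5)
Your proof is correct, and its skeleton is the paper's own: in the paper's notation $X\setminus\{x_0\}=U\sqcup W_0$ with $x_1\in W_0$, $X\setminus\{x_1\}=W_1\sqcup V$ with $x_0\in W_1$, and the three sets are $U$, $W:=W_0\cap W_1$, $V$ --- exactly your $B$, $A\cap C$, $D$; your containment $B\subseteq C$ is the paper's ``$U\cup\{x_0\}$ is connected, hence $U\cup\{x_0\}\subseteq W_1$''. The genuine difference is the step you single out as carrying the whole weight: non-emptiness of the middle set. The paper's proof establishes the covering and the three disjointness relations and stops; it never argues that $W\neq\varnothing$, even though the statement asserts it and the application really uses it --- in Theorem \ref{44}, properness of the subcontinua $K$ comes precisely from $W$ being a non-empty open set disjoint from $\cl_{\beta X}U\cup\cl_{\beta X}V$. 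So your clopen-set contradiction (if $A\cap C=\varnothing$, then $A=D\cup\{x_1\}$ is a non-empty proper clopen subset of $X$) fills a real gap rather than duplicating the printed argument. One caveat: that step, and the openness of the four sides in $X$, use closedness of singletons, a hypothesis absent from the lemma as stated. This is not avoidable pedantry on your part: in a non-$T_1$ cut-point space such as the Khalimsky line (the integers with odd singletons open and $\{2k-1,2k,2k+1\}$ the smallest neighborhood of $2k$), deleting two adjacent points leaves exactly two connected components, so no such three-set decomposition exists and the lemma as stated fails. Since every use in the paper is for Tychonoff $X$, your extra hypothesis costs nothing, and your proof is, if anything, the more complete of the two.
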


\begin{proof}Write $X\setminus \{x_0\}=U\sqcup W_0$ so that $x_1\in W_0$.  Write $X\setminus \{x_1\}=W_1\sqcup V$ with $x_0\in W_1$.  Let $W=W_0\cap W_1$. Note that $$X\setminus W=X\setminus (W_0\cap W_1)=(X\setminus W_0)\cup (X\setminus W_1)\subseteq U\cup V\cup \{x_0,x_1\}.$$ So $X\setminus \{x_0,x_1\}=U\cup W\cup V$. Clearly $U\cap W=\varnothing$  and $V\cap W=\varnothing$.  Finally, $U\cup \{x_0\}$ is connected, so $U\cup \{x_0\}\subseteq W_1$. Therefore $U\cap V=\varnothing$.
\end{proof}
 
 \begin{ut}\label{44}Let $X$ be a Tychonoff cut-point space, and suppose $\beta X$ is an  irreducible continuum.  Then $X$ is weakly ordered. \end{ut}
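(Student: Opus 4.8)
The plan is to read a linear order off the cut-point structure of $\beta X$ and then realize it inside a Hausdorff arc. Fix points $p,q$ between which $\beta X$ is irreducible. By Theorem \ref{41} every $x\in X$ is a cut point of $\beta X$, so $\beta X\setminus\{x\}=P_x\sqcup Q_x$ for disjoint open sets; I will orient this splitting by the endpoints, taking $q\in Q_x$ (and showing $p\in P_x$), and I will define $x\prec y$ to mean $y\in Q_x$. The heart of the argument is to prove that these splittings are linearly nested, so that $\prec$ is a linear order whose order rays are exactly the open sets $P_x\cap X$ and $Q_x\cap X$; then the identity from $X$ into the order completion is continuous, and completing and filling jumps produces the required Hausdorff arc.

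First I would record the elementary structural facts. For each $x$, the sets $\cl_{\beta X}P_x=P_x\cup\{x\}$ and $\cl_{\beta X}Q_x=Q_x\cup\{x\}$ are subcontinua meeting only in $\{x\}$: if $\cl_{\beta X}P_x$ split as $A\sqcup B$ with $x\in A$ and $\varnothing\ne B\subseteq P_x$, then $A\cup \cl_{\beta X}Q_x=\beta X\setminus B$ would be a proper nonempty clopen subset of $\beta X$, which is impossible. Irreducibility now fixes the orientation: for $x\notin\{p,q\}$ the points $p,q$ cannot lie on the same side, since $\cl_{\beta X}P_x$ or $\cl_{\beta X}Q_x$ would then be a proper subcontinuum containing both $p$ and $q$. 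Hence $x$ separates $p$ from $q$, and I set $P_x\ni p$, $Q_x\ni q$; the cases $x\in\{p,q\}$ are handled the same way and make $p,q$ the extrema of the order.

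The main step, and the place where I expect the real work to be, is nestedness: if $y\in Q_x$ then $\cl_{\beta X}P_x\subseteq \cl_{\beta X}P_y$. Because $y\notin \cl_{\beta X}P_x$, intersecting the connected set $\cl_{\beta X}P_x$ with the splitting $\beta X=\cl_{\beta X}P_y\cup \cl_{\beta X}Q_y$ exhibits it as a disjoint union of two relatively closed pieces meeting only in the excluded point $y$; since $p$ lies in the piece inside $\cl_{\beta X}P_y$, the other piece is empty. The symmetric statement gives $\cl_{\beta X}Q_x\subseteq \cl_{\beta X}Q_y$ when $y\in P_x$, and together these yield the equivalences $y\in Q_x\Leftrightarrow x\in P_y$ and $z\in P_x\Leftrightarrow z\prec x$. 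From the nesting one reads off totality, antisymmetry and transitivity of $\prec$, so $(X,\prec)$ is linearly ordered, with $P_x\cap X=\{z:z\prec x\}$ and $Q_x\cap X=\{z:x\prec z\}$ open in $X$.

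Finally I would assemble the arc. Let $A$ be the Dedekind completion of $(X,\prec)$ with top and bottom adjoined and every jump filled by a copy of $(0,1)$; this is a compact connected linearly ordered space, i.e.\ a Hausdorff arc, and $X$ sits inside it as a suborder. Each subbasic ray of $A$ pulls back to an initial or final segment of $(X,\prec)$, hence to a union of the open sets $P_y\cap X$ and $Q_y\cap X$; so the inclusion $X\hookrightarrow A$ is a continuous injection and $X$ is weakly ordered. One could instead take $A$ to be the quotient of $\beta X$ by the relation ``not separated by any $x\in X$'', matching the description of $\beta X$ as stratified into ordered layers, but the completion route avoids proving upper semicontinuity of that decomposition.
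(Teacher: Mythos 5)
Your proof is correct, and it takes a genuinely different route from the paper's. The paper proves (via Lemma \ref{43}) that every indecomposable subcontinuum of $\beta X$ is nowhere dense, then invokes Gordh's theorem on monotone decompositions of irreducible Hausdorff continua to obtain a map $\lambda\colon \beta X\to Y$ onto a Hausdorff arc whose fibers form an upper semi-continuous decomposition into maximal nowhere dense subcontinua, and finishes by using Lemma \ref{43} once more to show $\lambda$ is injective on $X$, so $\lambda\restriction X$ is the weak ordering. You bypass Gordh's structure theorem (and Lemma \ref{43}) entirely: you orient each splitting $\beta X\setminus\{x\}=P_x\sqcup Q_x$ by irreducibility, prove the splittings are nested, extract from this a linear order on $X$ whose open rays are exactly the sets $P_x\cap X$ and $Q_x\cap X$, and map $X$ continuously into the jump-filled Dedekind completion. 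Your approach is elementary and self-contained (no composant theory, no upper semicontinuity), and it exhibits the weak order as canonically determined by the separation structure; the cost is the hands-on verification at the end, where one must check that preimages of open rays of $A$ --- not arbitrary initial segments of $(X,\prec)$, some of which are not open --- are unions of sets of the form $P_y\cap X$ or $Q_y\cap X$; your three cases (points of $X$, gap points, points in filled jumps) do cover this. The paper's route is shorter given Gordh's theorem and buys the layer decomposition of $\beta X$ itself, which the Remark following the theorem exploits. One simplification you missed: the cases $x\in\{p,q\}$ that you propose to ``handle the same way'' are in fact vacuous, since a point of irreducibility cannot be a cut point of $\beta X$ (the closure of the side containing the other endpoint would be a proper subcontinuum containing both $p$ and $q$); hence $p,q\notin X$ by Theorem \ref{41}, and your orientation is defined for every $x\in X$ with no extra cases.
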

 
\begin{proof}Let $p,q\in \beta X$ such that $\beta X$ is irreducible between $p$ and $q$. 

 We claim that every indecomposable subcontinuum of $\beta X$ is nowhere dense.  Suppose to the contrary  that $I$ is an indecomposable subcontinuum of $\beta X$, and $I$ contains a non-empty $\beta X$-open subset $G$.  Let $x_0,x_1\in G\cap X$ and write $X\setminus \{x_0,x_1\}=U\sqcup W\sqcup V$ as in Lemma \ref{43}. Then  $U\cap G$ and $V\cap G$ are non-empty open sets.  Each composant of $I$ is dense in $I$, and every proper subcontinuum of $I$ is nowhere dense.  So there is a nowhere dense subcontinuum $N\subseteq I$ which intersects both $\cl_{\beta X}(U\cap G)$ and $\cl_{\beta X}(V\cap G)$. Since $U\cup \{x_0\}$ and $\{x_1\}\cup V$ are connected, we find that $K:=\cl_{\beta X}U\cup N\cup \cl_{\beta X} V$ is a  proper subcontinuum of $\beta X$. By irreducibility  between $p$ and $q$, $\{p,q\}\not\subseteq K$. Without loss of generality, assume $p\notin K$.  Then $p\in \cl_{\beta X} W\subseteq \cl_{\beta X}(X\setminus U)\cap \cl_{\beta X}(X\setminus V)$.  Note that $X\setminus U$ and $X\setminus V$ are connected, and $q\in \cl_{\beta X}(X\setminus U)\cup\cl_{\beta X}(X\setminus V)$.  Therefore  $p$  and $q$ are contained proper subcontinuum of $Y$. This is a contradiction.

By Gordh \cite{gor} and the claim above, $\beta X$ is a generalized $\lambda$-type continuum. That is, there is a Hausdorff arc $Y$ and a mapping $\lambda:\beta X\to Y$  such that $\{\lambda^{-1}\{y\}:y\in Y\}$ is an upper semi-continuous decomposition of $\beta X$ into maximal nowhere dense subcontinua. 

To prove $X$ is weakly ordered, it suffices to show $\lambda\restriction X$ is one-to-one. Suppose $x_0,x_1\in X$ and $\lambda(x_0)=y= \lambda(x_1)$.  If $x_0\neq x_1$ then we may write  $X\setminus \{x_0,x_1\}=U\sqcup W\sqcup V$ as in Lemma \ref{43}.  Then $K:=\cl_{\beta X} [U\cup \lambda^{-1}\{y\}\cup V]$ is a subcontinuum of $\beta X$ which contains both $p$ and $q$.  Since $\lambda^{-1}\{y\}$ is nowhere dense,  $K$ is a proper subset of $\beta X$. This  violates irreducibility between $p$ and $q$. Therefore $x_0=x_1$ and $\lambda$ is one-to-one. \end{proof}

\begin{ur}We observe that $\lambda^{-1}\{\lambda(x)\}$ is the union of two continua $H$ and $K$ such that $H\cap K=\{x\}$, and $\lambda^{-1}\{\lambda(x)\}=\{x\}$ if and only if $X$ is connected im-kleinen at $x$.\end{ur}

\begin{uc}\label{46}A Tychonoff cut-point space $X$ is weakly orderable  if and only if $\beta X$ is an irreducible continuum. \end{uc}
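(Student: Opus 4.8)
The plan is to derive this biconditional immediately from the two theorems already established, namely Theorem \ref{42} and Theorem \ref{44}, after observing that a cut-point space is connected by definition. So the work is almost entirely bookkeeping: matching the hypotheses of the corollary to those of the two theorems in turn.

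For the forward implication, I would assume $X$ is a weakly orderable Tychonoff cut-point space. The key observation is that connectedness comes for free: a cut-point space is, by definition, a connected space in which every point is a cut point. Thus $X$ is a connected, weakly orderable, Tychonoff space, which is exactly the hypothesis of Theorem \ref{42}. Applying that theorem yields at once that $\beta X$ is an irreducible continuum. No extra argument is needed here.

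For the converse, I would assume $\beta X$ is an irreducible continuum. Since $X$ is already a Tychonoff cut-point space, the hypotheses of Theorem \ref{44} are satisfied verbatim, and its conclusion is that $X$ is weakly ordered. As being weakly ordered trivially entails being weakly orderable, this closes the equivalence.

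I do not expect any genuine obstacle, since all the substantive content has been proved in Theorems \ref{42} and \ref{44}; the corollary simply records their equivalence in the presence of the cut-point hypothesis. The only points requiring the slightest care are verifying that the connectedness demanded by Theorem \ref{42} is supplied automatically by the definition of a cut-point space, and noting the harmless passage from ``weakly ordered'' to ``weakly orderable.''
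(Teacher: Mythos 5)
Your proof is correct and coincides exactly with the paper's own argument, which simply says ``Combine Theorems \ref{42} and \ref{44}.'' Your added remarks about connectedness being built into the definition of a cut-point space and the passage from ``weakly ordered'' to ``weakly orderable'' are accurate bookkeeping and do not change the approach.
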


\begin{proof}Combine Theorems \ref{42} and \ref{44}. \end{proof}

\begin{ut}\label{47}Let $X$ be a   connected weakly orderable normal space.  Then   densely embeds into an irreducible continuum of the same weight as $X$. In particular, if $X$ is separable metrizable then $X$   densely embeds into an  irreducible metrizable continuum.\end{ut}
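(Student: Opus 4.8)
The plan is to realize the required continuum as the closure of $X$ inside the product of a Hausdorff arc and a cube, and to force irreducibility by transporting the cut‑point structure of $\beta X$ through the projection onto the arc. First I would fix the normal form. Writing $\kappa=w(X)$, Theorem \ref{42} gives that $\beta X$ is irreducible between some $p,q$, and the construction inside the proof of Theorem \ref{44}, together with the Remark following it, supplies the monotone layer map $\lambda\colon\beta X\to Y$ onto a Hausdorff arc $Y=[a,b]$ whose fibers $\lambda^{-1}(y)$ are nowhere dense subcontinua, each meeting $X$ in a single point, with $p,q$ over the endpoints $a,b$. Set $f=\lambda\restriction X$; this is the weak order of $X$, realizing $X$ as a dense connected subset of $Y$ with $Y\setminus X\subseteq\{a,b\}$. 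Since $Y$ is a nondegenerate connected LOTS, $w(Y)=d(Y)\le d(X)\le\kappa$.

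Next I would build the compactification. Choose a family $\{g_\alpha:\alpha<\kappa\}$ of continuous maps $X\to[0,1]$ embedding $X$ into $[0,1]^\kappa$ (available since $X$ is Tychonoff of weight $\kappa$), put $e=(f,(g_\alpha)_\alpha)\colon X\to Y\times[0,1]^\kappa$, and let $Z=\cl\,e(X)$. Then $e$ is an embedding, $Z$ is a continuum (closure of the connected set $e(X)$), and $w(Z)\le\max(w(Y),\kappa)=\kappa$; as $w(X)\le w(Z)$ always, $Z$ is a compactification of $X$ of weight exactly $\kappa$. Let $\beta e\colon\beta X\to Z$ be the induced surjection and $p_0=\beta e(p)$, $q_0=\beta e(q)$. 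The first projection $\pi_Y\colon Z\to Y$ satisfies $\pi_Y\circ\beta e=\lambda$ (both extend $f$ continuously), so each fiber $\pi_Y^{-1}(y)=\beta e(\lambda^{-1}(y))$ is the image of a layer, hence a continuum, and $\pi_Y$ is monotone. For interior $t\in X$ put $U_t=\{s\in X:s<t\}$, $V_t=\{s\in X:s>t\}$; applying the boxed fact with $W=\beta X\setminus\{t\}$ exactly as in Theorem \ref{42}, the sets $\cl_{\beta X}U_t\setminus\{t\}$ and $\cl_{\beta X}V_t\setminus\{t\}$ are disjoint $\beta X$‑open sets covering $\beta X\setminus\{t\}$, with $p\in\cl_{\beta X}U_t$ and $q\in\cl_{\beta X}V_t$.

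The key separation condition is $(\star_t)\colon\ \beta e(\cl_{\beta X}U_t)\cap\beta e(\cl_{\beta X}V_t)=\{e(t)\}$. Granting $(\star_t)$ for a set $D\subseteq X$ that is dense in $X$ and has $|D|\le\kappa$, irreducibility of $Z$ between $p_0$ and $q_0$ follows at once. Indeed, if $K\subseteq Z$ is a subcontinuum with $p_0,q_0\in K$ and $e(t)\notin K$ for some $t\in D$, then $\tilde K=\beta e^{-1}(K)$ is a closed set containing $p,q$ but not $t$, so by the previous paragraph it partitions into the relatively clopen pieces $\tilde K\cap\cl_{\beta X}U_t\ni p$ and $\tilde K\cap\cl_{\beta X}V_t\ni q$. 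Then $K=\beta e(\tilde K)$ is the union of the two nonempty compacta $\beta e(\tilde K\cap\cl_{\beta X}U_t)$ and $\beta e(\tilde K\cap\cl_{\beta X}V_t)$; as $K$ is connected they meet, and by $(\star_t)$ their only possible common point is $e(t)$, contradicting $e(t)\notin K$. Hence $e(D)\subseteq K$, so $K$ is dense and $K=Z$.

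The main obstacle is securing $(\star_t)$ for a dense $D$ while keeping the number of coordinates at $\kappa$. Through the $\pi_Y$‑coordinate the condition collapses to separating the two ``halves'' $H_t,K_t$ of the single layer $\lambda^{-1}(t)$ (recall $H_t\cap K_t=\{t\}$ from the Remark): any $a\in\cl_{\beta X}U_t\setminus\{t\}$, $b\in\cl_{\beta X}V_t\setminus\{t\}$ identified by $\beta e$ would satisfy $\lambda(a)=\lambda(b)$, forcing $\lambda(a)=\lambda(b)=t$ and thus $a\in H_t\setminus\{t\}$, $b\in K_t\setminus\{t\}$. At points where $X$ is connected im‑kleinen the layer is $\{t\}$ and $(\star_t)$ is automatic; for each $t$ in a dense set $D$ of size $\le\kappa$ I would adjoin a single Urysohn‑type coordinate $g_t\colon X\to[0,1]$ with $g_t<\tfrac12$ on $U_t$, $g_t>\tfrac12$ on $V_t$, whose extension attains $\tfrac12$ on $\cl_{\beta X}U_t\cup\cl_{\beta X}V_t$ only at $t$; this keeps $\beta e(H_t)$ and $\beta e(K_t)$ disjoint off the apex, yields $(\star_t)$, and leaves $w(Z)=\kappa$ since only $\le\kappa$ extra coordinates are used. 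Producing each such $g_t$—a continuous function on $X$ that ``detects'' the apex across the order cut—is the delicate point where normality of $X$ enters, and is the heart of the argument; in the separable metrizable case this reproduces Duda's construction and gives a weight‑$\aleph_0$, hence metrizable, irreducible continuum.
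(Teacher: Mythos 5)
Your reduction step is sound: granting $(\star_t)$ on a dense set $D$, the pull-back argument showing $Z$ is irreducible between $p_0$ and $q_0$ is correct, and it runs parallel to the way the paper verifies irreducibility. The genuine gap is in producing $(\star_t)$, and it is not a removable technicality but the place where your approach breaks. Since $X=U_t\cup\{t\}\cup V_t$ and, by connectedness, $t\in\cl_X U_t\cap\cl_X V_t$, we have $\cl_{\beta X}U_t\cup\cl_{\beta X}V_t=\beta X$; so your stipulation that $\beta g_t$ attain the value $\tfrac12$ on $\cl_{\beta X}U_t\cup\cl_{\beta X}V_t$ only at $t$ says exactly that $\{t\}=(\beta g_t)^{-1}(\tfrac12)$ is a zero-set, i.e.\ a $G_\delta$-point, of $\beta X$. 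Normality cannot deliver this: Urysohn's Lemma separates \emph{disjoint} closed sets, whereas $\cl_X U_t$ and $\cl_X V_t$ meet at $t$, and being a $G_\delta$-point of $\beta X$ is a character condition, not a separation condition. It already forces $\{t\}$ to be $G_\delta$ in $X$, which fails, for example, at every point $\langle\alpha,0\rangle$ with $\mathrm{cf}(\alpha)=\omega_1$ in the $\omega_2$-analogue of the long line of Example \ref{ex2} (a connected, orderable, normal space). At points where the $G_\delta$ condition fails, your only other tool is the connected-im-kleinen fallback; so what your plan actually needs is that the points which are either connected-im-kleinen points of $X$ or $G_\delta$-points of $\beta X$ form a dense set of size at most $\kappa$ --- an unproved extra hypothesis that you assert rather than establish. (Weakening $(\star_t)$ to a condition only on the layer $\lambda^{-1}\lambda(t)$ does not rescue this: you would then have to control the values of $\beta g_t$ on a prescribed subset of $\beta X\setminus X$, which again is beyond what normality, or any internal property of $X$, hands you.)

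For contrast, the paper's proof never separates at a point, which is precisely how it avoids this obstruction. It fixes a basis $\{U_\alpha:\alpha<\kappa\}$ of nonempty open subsets of $X\setminus\{p,q\}$ and uses irreducibility of $\beta X$ to split the complement of $U_\alpha$ into disjoint compacta $A_\alpha\ni p$ and $B_\alpha\ni q$. The traces $A_\alpha\cap X$ and $B_\alpha\cap X$ are genuinely disjoint closed subsets of $X$, so Urysohn's Lemma applies verbatim --- this is the only place normality is used --- and the coordinates $f_\alpha$, diagonalized with an embedding $g$ of $X$ into a Tychonoff cube, yield $Z=\overline{h[X]}$ of weight $\kappa$. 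In the irreducibility check, a subcontinuum through $\beta h(p)$ and $\beta h(q)$ missing a point of $h[X]$ must miss an entire basic set $h[U_\alpha]$, hence lies in $\cl h[A_\alpha\cap X]\sqcup\cl h[B_\alpha\cap X]$, which the coordinate $f_\alpha$ keeps apart, contradicting connectedness. Trading your point-separations for these open-set separations is the missing idea; it also dispenses with the Gordh decomposition of Theorem \ref{44} entirely (which, as a side issue, you are not strictly entitled to quote: its hypothesis is that $X$ is a Tychonoff \emph{cut-point space}, while a connected weakly orderable space may have non-cut endpoints).
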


\begin{proof} Let $X$ be a connected weakly orderable normal space. Let $\kappa$ be weight of $X$, i.e. the least cardinality of a basis for $X$. 

By Theorem \ref{42}, $\beta X$ is irreducible between two points $p$ and $q$.  Let $\{U_\alpha:\alpha<\kappa\}$ be a basis for $X\setminus \{p,q\}$ with each $U_\alpha\neq\varnothing$. For each $\alpha<\kappa$ we have that $\beta X\setminus U_\alpha$ is the union of two disjoint compact sets $A_\alpha$ and $B_\alpha$ with $p\in A_\alpha$ and $q\in B_\alpha$.  

By Urysohn's Lemma, for every $\alpha<\kappa$ there is a continuous function $f_\alpha:X\to[0,1]$ such that $f_\alpha[A_\alpha\cap X]=0$ and $f_\alpha[B_\alpha\cap X]=1$.  Define $f:X\to [0,1]^\kappa$ by $f(x)=\langle f_\alpha(x)\rangle_{\alpha<\kappa}$. 

Let $g:X\to [0,1]$ be a homeomorphic embedding of $X$ into the Tychonoff cube $[0,1]^\kappa$, and put  $h=f\times g$. Then $h:X\to [0,1]^{ \kappa}\times [0,1]^{ \kappa}$ is a homeomorphism, and $\overline{h[X]}$ is a continuum irreducible between $\beta h(p)$ and $\beta h(q)$.   Here, $\beta h:\beta X\to  \overline{h[X]}$ is the Stone-\v{C}ech extension of $h$.  \end{proof}

%\begin{ur}  %The construction technique is  heavily dependent on the Axiom of Choice, and produces non-Borel sets.  Duda \cite[Theorem 6]{dud} effectively constructed  such functions without the Axiom of Choice.
 %\end{ur}

\section{Non-cut points in compactifications}

The non-cut point existence theorem for connected  compact spaces, stated below,  was originally proved by R.L. Moore \cite{more} in the context of metric spaces. It was generalized for T$_1$ spaces by G.T. Whyburn \cite{why}, and finally  for all topological spaces by B. Honari and Y. Bahrampour  in \cite{poo}. 

\begin{ut}[Theorem 3.9 in \cite{poo}]If $X$ is a compact connected topological space with more than
one point, then X has at least two non-cut points. \end{ut}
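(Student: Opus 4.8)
The plan is to adapt the classical ``cut pieces'' argument of Moore and Whyburn: I will locate non-cut points as the limits of maximal nested families of one-sided pieces, and then produce two distinct such points by working on the two sides of a single separation. Throughout, for a cut point $c$ with separation $X\setminus\{c\}=A\sqcup B$ into disjoint nonempty sets open in $X\setminus\{c\}$, I will call $\overline A$ and $\overline B$ (i.e.\ $A\cup\{c\}$ and $B\cup\{c\}$, after the relevant connectedness check) the \emph{cut pieces} determined by $c$. The endgame is that, if $X$ has no cut point at all, then each of its at least two points is a non-cut point and we are done; otherwise I fix one cut point and find a non-cut point strictly inside each of its two sides.

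First I would record the two structural lemmas that make everything run. (i) Each cut piece is a subcontinuum, and the two cut pieces determined by $c$ meet only in $\{c\}$; this is the usual ``re-adjoining the cut point to one side preserves connectedness'' argument. (ii) (Orientation/comparability.) Fix a cut point $c$ and one of its cut pieces $P=\overline A$. For every cut point $c'\in A$ exactly one side of $c'$ contains $c$; calling the other side $A'$, the inner cut piece $\overline{A'}$ satisfies $\overline{A'}\subseteq A\subseteq P$ and $c\notin\overline{A'}$, and two such inner pieces through nested defining points are themselves nested. The force of (ii) is that the family $\mathcal D_P$ of all inner cut pieces (together with $P$) is a genuine inclusion-poset in which shrinking pieces stay inside $A$ and avoid $c$.

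Next I would run a Zorn/compactness argument. Take a maximal chain $\mathcal C\subseteq\mathcal D_P$ and set $K_0=\bigcap\mathcal C$. The members of $\mathcal C$ are closed with the finite-intersection property, so compactness gives $K_0\neq\varnothing$; a nested intersection of subcontinua of a compact space is again a subcontinuum, so $K_0$ is a continuum contained in $\overline A$. As soon as $\mathcal C$ contains one proper inner piece we have $c\notin K_0$, hence $K_0\subseteq A$. I then claim $A$ contains a non-cut point of $X$. Indeed, were every point of the nonempty open set $A$ a cut point, then a cut point $x$ lying deep enough inside $\mathcal C$ (in particular in $K_0$, or in the open side of a minimal member of $\mathcal C$) would, by (ii), determine an inner cut piece $\overline{A_x}$ comparable with every member of $\mathcal C$ and strictly smaller than some member through $x$; adjoining $\overline{A_x}$ would properly enlarge $\mathcal C$, contradicting maximality. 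Thus $A$ contains a non-cut point. Applying this to both sides $A$ and $B$ of the fixed cut point $c$ yields non-cut points $x\in A$ and $y\in B$, and since $A\cap B=\varnothing$ we get $x\neq y$, giving the two required non-cut points.

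I expect the main obstacle to be the orientation/comparability lemma (ii) together with its deployment in the Zorn step, and in particular carrying all of this out \emph{without any separation axiom}, since the statement is asserted for arbitrary compact connected spaces. Without Hausdorffness one cannot freely pass to closures or treat singletons as closed, so ``cut piece,'' ``a nested intersection of continua is a continuum,'' and ``the inner piece avoids $c$'' must each be re-derived from connectedness and compactness alone (as in Honari--Bahrampour). Establishing that an interior cut point always produces a piece that is simultaneously comparable to the chosen chain and strictly smaller than a member of it -- i.e.\ that the one-sided pieces genuinely organize into a tree-like order -- is the delicate combinatorial-topological core; the compactness and Zorn bookkeeping around it is routine by comparison.
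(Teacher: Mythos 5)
The first thing to note is that the paper does not prove this statement at all: it is quoted, with attribution, as Theorem 3.9 of \cite{poo}, so your attempt can only be measured against the proof there and against the classical arguments of Moore \cite{more} and Whyburn \cite{why} that it generalizes. Your sketch is precisely the classical cut-piece/maximal-chain argument, and for T$_1$ spaces it is essentially correct: singletons are closed, so each inner piece $A'\cup\{c'\}$ equals $\overline{A'}$ and is closed, the chain has the finite intersection property, and your two-case use of maximality (a point in the open side of a minimal member of $\mathcal{C}$, or else a point of $K_0$ when no minimal member exists) is the right bookkeeping. One incidental error: ``a nested intersection of subcontinua of a compact space is again a subcontinuum'' is false without Hausdorffness --- the unit interval with a doubled endpoint is a compact T$_1$ counterexample --- but this is harmless, since your argument only ever needs $K_0\neq\varnothing$, not its connectedness.

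The genuine gap is that the theorem is asserted for arbitrary compact connected topological spaces, and that generality is exactly what your proof does not deliver and exactly what the paper needs: the results of Section 4 are proved with no separation axioms, and Theorem \ref{52} applies the present theorem to compact connected subsets of a general connected space. Without T$_1$ your basic objects misbehave already in a three-point space: let $X=\{a,b,c\}$ with open sets $\varnothing,\{b\},\{a,b\},\{b,c\},X$ (the quotient of $[0,1]$ collapsing $\{0\}$, $(0,1)$, $\{1\}$ to points). Here $b$ is a cut point with separation $A=\{a\}$, $B=\{c\}$, yet $\overline A=\{a\}\neq A\cup\{b\}$, and the connected piece $A\cup\{b\}$ is not closed, since $\overline{\{b\}}=X$. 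So a ``cut piece'' cannot in general be chosen to be simultaneously closed and connected; the finite-intersection-property step giving $K_0\neq\varnothing$ collapses, and the nesting lemma has to be reworked. You flag this as the main obstacle, but your only resolution is that these facts ``must be re-derived \ldots (as in Honari--Bahrampour)'' --- that is, you delegate the crucial step to the very theorem being proved. Since coping with the absence of separation axioms is the entire content of Theorem 3.9 of \cite{poo} beyond Whyburn's T$_1$ theorem, what you have written is a proof of the T$_1$ case, not of the stated result.
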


No separation axioms are needed to prove the next four results.

\begin{ut}\label{52}If $X$ is a cut-point space, then for every $x\in X$ and connected component $C$ of $X\setminus \{x\}$, $C\cup \{x\}$ is non-compact.\end{ut}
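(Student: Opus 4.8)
The plan is to argue by contradiction: suppose that $C\cup\{x\}$ is compact for some $x\in X$ and some component $C$ of $X\setminus\{x\}$, and then manufacture a non-cut point of $X$, contradicting the hypothesis that $X$ is a cut-point space. The engine will be the non-cut point existence theorem of Honari and Bahrampour quoted just above, which has the virtue of applying to an \emph{arbitrary} compact connected space and therefore needs no separation axioms.

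First I would record the two connectivity facts on which everything rests. Since $C$ is a component of $X\setminus\{x\}$ it is closed in $X\setminus\{x\}$, so $\overline C\subseteq C\cup\{x\}$; I claim that in fact $C\cup\{x\}$ is connected, and likewise that $X\setminus C=\{x\}\cup\big((X\setminus\{x\})\setminus C\big)$ is connected. Granting this, the compact set $K:=C\cup\{x\}$ is a compact connected space with more than one point (note $C\neq\varnothing$ since $x$ is a cut point), so by the existence theorem $K$ has at least two non-cut points; at least one of them, call it $y$, is different from $x$, whence $y\in C$.

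Then I would transfer $y$ back to $X$. Write $X\setminus\{y\}=(K\setminus\{y\})\cup(X\setminus C)$. The first set is connected because $y$ is a non-cut point of $K$; the second is connected by the fact recorded above; and the two meet in the point $x$, since $(K\setminus\{y\})\cap(X\setminus C)=\{x\}$ using $y\in C$ and $y\neq x$. A union of two connected sets with a common point is connected, so $X\setminus\{y\}$ is connected, i.e. $y$ is a non-cut point of $X$. This contradicts $X$ being a cut-point space and completes the argument.

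I expect the real work, and the main obstacle, to be precisely the connectivity bookkeeping, i.e. verifying that $C\cup\{x\}$ and $X\setminus C$ are connected without the luxury of separation axioms. A short analysis shows that $C\cup\{x\}$ can fail to be connected only when $\{x\}$ is clopen in it, which forces both $x\notin\overline C$ and $\overline{\{x\}}\cap C=\varnothing$; the task is to rule this out using nothing more than connectedness of $X$. The symmetric statement for $X\setminus C$ is even more delicate: the natural route is to write $X\setminus C$ as the union of the sets $E\cup\{x\}$ over the components $E\neq C$ of $X\setminus\{x\}$ and to invoke that these share the point $x$, which requires knowing that $x$ lies in the closure of \emph{every} such component $E$. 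Because these ``$x$ attaches to each component'' statements cannot simply be quoted from the continuum-theoretic boundary-bumping toolbox in the absence of separation axioms, establishing them carefully is where I anticipate the substance of the proof to lie.
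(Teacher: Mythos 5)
Your high-level plan (contradiction, Honari--Bahrampour on a compact connected subset, then gluing a non-cut point back into $X$) is the right one, and it coincides with half of the paper's proof. But the two connectivity claims you defer to the end are exactly where the proof lives, and the routes you sketch for them do not work. The fatal one is the claim that $C\cup\{x\}$ is connected. You propose to ``rule out'' the bad case $x\notin\overline{C}$, $\overline{\{x\}}\cap C=\varnothing$ ``using nothing more than connectedness of $X$,'' but this is impossible: in a general connected space a component of $X\setminus\{x\}$ need not have $x$ in its closure (in the Knaster--Kuratowski fan the components of the complement of the apex are singletons far from the apex). Even granting all hypotheses of the theorem, the statement ``$C\cup\{x\}$ is connected whenever $C\cup\{x\}$ is compact'' is true only vacuously, because those hypotheses are contradictory --- which is precisely what is to be proved. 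So establishing your claim in the case $\overline{C}=C$ is not a bookkeeping step; it \emph{is} the theorem in that case, and it needs a different argument. The paper splits on $\overline{C}\in\{C,\,C\cup\{x\}\}$ (legitimate since $C$ is closed in $X\setminus\{x\}$): when $\overline{C}=C\cup\{x\}$ connectedness is free (closure of a connected set) and your argument is exactly the paper's Case 1; when $\overline{C}=C$, the paper applies the non-cut point existence theorem to $C$ itself (now compact, connected, and with more than one point since a one-point $C$ would already be a non-cut point of $X$), obtains two non-cut points $y_0,y_1$ of $C$, chooses $y_b$ with $\overline{X\setminus C}\cap(C\setminus\{y_b\})\neq\varnothing$, and concludes that $X\setminus\{y_b\}$ is the union of two connected sets which are \emph{non-separated} --- not sets with a common point, which is what your gluing step requires and which is unavailable here since $C$ and $X\setminus C$ are disjoint.

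Your route to the second claim, that $X\setminus C$ is connected, is also a dead end: writing $X\setminus C$ as $\bigcup\{E\cup\{x\}: E\neq C \text{ a component of } X\setminus\{x\}\}$ and gluing at $x$ again needs $x\in\overline{E}$ for every component $E$, which fails for the same reason. The correct argument (the paper's) avoids attachment entirely: if $X\setminus C=A\sqcup B$ with $A,B$ separated and $x\in A$, then by the standard lemma that a connected set together with one side of a separation of its complement is connected (valid with no separation axioms), $C\cup B$ is a connected subset of $X\setminus\{x\}$ strictly larger than $C$, contradicting maximality of the component $C$. This claim, unlike the first, is salvageable; but as written your proposal both leaves its two pillars unproven and points toward proofs of them that cannot be completed, so the argument has a genuine gap rather than a deferred routine verification.
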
  

\begin{proof}Let $X$ be a cut-point space.  Let $x\in X$, and let $C$ be a connected component of $X\setminus \{x\}$. Suppose $C\cup \{x\}$ is compact.  We will reach a contradiction by finding a non-cut point of $X$ in $C$.  

Observe that $X\setminus C$ is connected. For if $X\setminus C$ is the union of two nonempty and disjoint separated sets $A$ and $B$ with $x\in A$, then $C\cup B$ is a connected subset of $X\setminus \{x\}$ bigger than $C$.  Also, $C$ is closed in the subspace $X\setminus \{x\}$, implying $\overline{C}\in \{C,C\cup \{x\}\}$.

\textit{Case 1}: $\overline{C}=C\cup \{x\}$. Then  $C\cup \{x\}$ is a compact connected set with more than one point and thus has a non-cut point $y\in C$.  Observe that  $X\setminus \{y\}$ is equal to the union of the two connected sets $(C\cup \{x\})\setminus \{y\}$ and $X\setminus C$ which have the point $x$ in common. Therefore $y$ is a non-cut point of $X$. 

\textit{Case 2}: $\overline{C}=C$. Then $C$ is compact and connected. Additionally,  $X\setminus C$ is connected implies $C$ has more than one point. Thus $C$ has two non-cut points $y_0$ and $y_1$.   There exists $b\in 2$ such that $\{y_b\}\neq \overline{X\setminus C}\cap C$.  By connectedness of $X$ we have $\overline{X\setminus C}\cap C =\overline{X\setminus C}\cap \overline{C}\neq\varnothing$. By  the choice of $b$ it follows that $\overline{X\setminus C}\cap (C\setminus \{y_b\})\neq\varnothing$. Thus $X\setminus \{y_b\}$ is the union of two non-separated connected sets   $X\setminus C$ and $C\setminus \{y_b\}$. Therefore $X\setminus \{y_b\}$ is connected and $y_b$ is a non-cut point of $X$. 

In each case we reached a contradiction. Therefore $C\cup \{x\}$ is non-compact. \end{proof}

%\begin{ur} Case 2 is possible.  Let $K$ be the Knaster-Kuratowski fan with dispersion point $d$, and define the topology on $X:=K\times \mathbb R$ so that  each point $\langle x,0\rangle$ bas a neighborhood base of product-open (Euclidean)  sets; and each point $\langle x,r\rangle$, $r\neq 0$, has a neighborhood base of  intervals $\{x\}\times(r-\epsilon,r+\epsilon)$, $0<\epsilon<|r|$. Clearly $X$ is a cut-point space. Let  $x=\langle d,0\rangle$, and let $y=\langle y(0),y(1)\rangle\in X\setminus (\{d\}\times \mathbb R)$.  Then $C:=\{y(0)\}\times \mathbb R$ is the connected component of  $y$ in $X\setminus \{x\}$, and $\overline C=C$.\end{ur}

\begin{uc}\label{53}Let $X$ be a locally connected cut-point space. If $x\in X$ has a compact neighborhood, and $\{x\}$ is closed, then $X\setminus \{x\}$ has only finitely many connected components.  \end{uc}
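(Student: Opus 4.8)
The plan is to reduce the statement to a finiteness claim about \emph{open} components, then cover the compact neighborhood of $x$, extract a finite subcover, and use Theorem \ref{52} to rule out any leftover component. First I would note that since $\{x\}$ is closed, $X\setminus\{x\}$ is open, and since $X$ is locally connected every connected component $C$ of $X\setminus\{x\}$ is open. Because $X$ is connected, no such $C$ can be clopen in $X$, so $x\in\overline C$, and in fact $\overline C=C\cup\{x\}$ (the component $C$ is closed in $X\setminus\{x\}$). Hence $\overline C$ is connected, and Theorem \ref{52} gives that $\overline C=C\cup\{x\}$ is non-compact. This yields the key escape property: if $N$ is a compact neighborhood of $x$, then $C\not\subseteq N$ for every component $C$, since $C\subseteq N$ would force $\overline C\subseteq N\cup\{x\}$ to be a closed subset of the compact set $N\cup\{x\}$, hence compact, contradicting Theorem \ref{52}. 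I also record that $x\in\overline C$ means every component meets every neighborhood of $x$.

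Next, using local connectedness I would choose a connected open set $U$ with $x\in U\subseteq N$, for instance the component of $\operatorname{int}N$ containing $x$. Since $X=\{x\}\cup\bigcup\{C:C\text{ a component}\}$ and $x\in U$, the family $\{U\}\cup\{C:C\text{ a component}\}$ is an open cover of the compact set $N$, so there is a finite subcover $N\subseteq U\cup C_1\cup\cdots\cup C_n$. I claim these $C_1,\dots,C_n$ are all the components. If $C$ were a component distinct from each $C_k$, then $C$ is disjoint from each $C_k$, so $C\cap N\subseteq C\cap U$; as $U\subseteq N$ this gives $C\cap N=C\cap U$. Here $C\cap U$ is a non-empty (because $x\in\overline C$) open subset of $C$, and it is a proper subset of $C$ because $C\not\subseteq N$. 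To contradict the connectedness of $C$ it then suffices to know that $C\cap N$ is closed in $C$, for then $C\cap U=C\cap N$ is a non-trivial clopen subset of the connected set $C$.

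The main obstacle is exactly this last point. Closedness of $C\cap N$ in $C$ amounts to having the compact neighborhood closed (equivalently, to choosing $U$ with compact closure $\overline U\subseteq N$), and without any separation axiom a compact set need not be closed. The care therefore goes into arranging, from the compact neighborhood together with local connectedness and the closedness of $\{x\}$, a closed compact neighborhood $N$ of $x$, after which $C\cap N$ is automatically closed in $C$ and the clopen contradiction finishes the proof. An equivalent packaging that isolates the same issue is to argue via the frontier: every component meets $U$ and leaves $\overline U$, hence meets $\partial U=\overline U\setminus U$; since $\partial U\subseteq X\setminus\{x\}$ is covered by the pairwise disjoint open components, compactness of $\partial U$ again forces only finitely many of them. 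Either way, the separation-free control of the closure (compactness of $\partial U$, respectively closedness of $N$) is the step demanding the most care.
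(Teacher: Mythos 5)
Your approach is essentially the paper's. The paper's proof runs exactly along your lines: the components $C_\alpha$ of $X\setminus\{x\}$ are open by local connectedness; each satisfies $C_\alpha\setminus N\neq\varnothing$, by Theorem \ref{52} together with the closedness of $C_\alpha\cup\{x\}$ (your ``escape property''); each $C_\alpha$ has $x$ in its closure, hence meets $\operatorname{int}N$, hence, being connected and leaving $N$, meets $\partial N$; finally the pairwise disjoint open sets $C_\alpha$ cover the compact set $\partial N$ and each one is needed, so there are only finitely many. Thus your ``equivalent packaging'' via the frontier \emph{is} the paper's argument, with one economy on the paper's side: it takes the frontier of the given compact neighborhood $N$ itself rather than of a smaller open set $U$, so no auxiliary $U$ with $\overline U\subseteq N$ has to be produced (producing such a $U$ would itself require a regularity-type argument).

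Concerning the obstacle you flag and leave open: it is real, and the paper does not resolve it either. The paper's last step silently uses that $\partial N$ is compact; this is justified when $N$ is closed, for then $\partial N=N\setminus\operatorname{int}N$ is closed in the compact set $N$, but a compact neighborhood need not be closed in the absence of Hausdorffness --- and the section containing this corollary is prefaced by the claim that no separation axioms are needed. In the setting where the corollary is actually applied (Theorem \ref{58}, for Tychonoff spaces), compact sets are closed, $\partial N$ is compact, and both your argument and the paper's close up completely; in the advertised full generality, the step you honestly defer (``arrange a closed compact neighborhood'') is precisely the step the paper passes over with the words ``by compactness''. So your proposal matches the paper's proof in both structure and level of rigor; the only substantive remark is that working with $\partial N$ directly, as the paper does, reduces the needed hypothesis from ``there is a closed compact neighborhood'' to ``the given $N$ is closed'', which is automatic in the Hausdorff case but unaddressed (by you and by the paper alike) beyond it.
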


\begin{proof}Suppose $N$ is a compact neighborhood of $x$, and $\{x\}$ is closed.  Let $\{C_\alpha:\alpha<\kappa\}$ be the set of connected components of $X\setminus \{x\}$. Since $X$ is locally connected and $X\setminus \{x\}$ is open,  each $C_\alpha$ is open.  

By Theorem \ref{52} and the fact that $\{x\}\cup C_\alpha$ is closed, we have $C_\alpha\setminus N\neq\varnothing$ for each $\alpha<\kappa$.  Since $C_\alpha$ is a relatively clopen subset of $X\setminus \{x\}$, by connectedness of $X$ we  have $x\in \overline{C_\alpha}$.  So $C_\alpha\cap \partial N\neq\varnothing$.  

The $C_\alpha$'s are pairwise disjoint, so no proper subcollection of $\{C_\alpha:\alpha<\kappa\}$ covers $\partial N$. A finite subcollection covers $\partial N$  by compactness, so $\kappa$ is finite. \end{proof}

\begin{uc}\label{53}Let $X$ be a cut-point space which is a dense subset of a compact space $Y$.  If $Y\setminus X$ is connected, then $Y$ has no cut points.\end{uc}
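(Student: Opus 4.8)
The plan is to prove the contrapositive-flavored statement directly: assuming $Y \setminus X$ is connected, I will show that no point $y_0 \in Y$ can be a cut point of $Y$. The key structural input is Theorem \ref{52}, which guarantees that for a cut-point space $X$, each set $C \cup \{x\}$ (where $C$ is a component of $X \setminus \{x\}$) is non-compact; since $Y$ is compact and $X$ is dense in $Y$, non-compactness inside $X$ forces the relevant pieces to ``reach out'' to the remainder $Y \setminus X$. My strategy is to use this to argue that the components of $X \setminus \{x\}$ all have closures meeting $Y \setminus X$, and then exploit connectedness of $Y \setminus X$ to glue everything back together.

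First I would fix an arbitrary $y_0 \in Y$ and suppose, toward a contradiction, that $Y \setminus \{y_0\} = P \sqcup Q$ is a separation into disjoint nonempty $Y$-open sets. I would split into the two natural cases depending on whether $y_0 \in X$ or $y_0 \in Y \setminus X$. In the case $y_0 \in Y \setminus X$: since $Y \setminus X$ is connected and $y_0 \in Y\setminus X$, the set $(Y \setminus X) \setminus \{y_0\}$ lies entirely in one of $P$ or $Q$, say $P$; but then $Q \subseteq X$ is a nonempty clopen-in-$X$-complement piece, and I would derive a contradiction with the connectedness of $X$ (using that $X$ is dense, so $\overline{Q} \cap \overline{X \setminus Q}$ must be nonempty and cannot be absorbed solely by the missing point $y_0 \notin X$).

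In the case $y_0 \in X$: I would look at the components of $X \setminus \{y_0\}$. By Theorem \ref{52}, for each component $C$, the set $C \cup \{y_0\}$ is non-compact, so $\overline{C}^{\,Y}$ (the closure in $Y$) must meet $Y \setminus (C \cup \{y_0\})$, and by density and compactness arguments I expect to show $\overline{C}^{\,Y} \cap (Y \setminus X) \neq \varnothing$. Since each component's closure meets the \emph{connected} remainder $Y \setminus X$, the union $(Y\setminus X) \cup \bigcup_C \overline{C}^{\,Y}$ is connected; combined with the components themselves this recovers a connected subset of $Y \setminus \{y_0\}$ straddling both $P$ and $Q$, contradicting the separation. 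The delicate point is to handle the placement of $y_0$ relative to the closures and to ensure the separation of $Y \setminus \{y_0\}$ genuinely separates the pieces I have assembled.

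The main obstacle I anticipate is the bookkeeping around closures in $Y$ versus $X$ and precisely establishing that each component $C$ of $X \setminus \{y_0\}$ has $\overline{C}^{\,Y} \cap (Y \setminus X) \neq \varnothing$. Non-compactness of $C \cup \{y_0\}$ from Theorem \ref{52} does not instantly yield a limit point in $Y \setminus X$; I would need to rule out the possibility that $\overline{C}^{\,Y} \setminus (C \cup \{y_0\})$ consists only of points of $X$ that are in other components, which is where local structure of cut-point spaces and the separation $Y \setminus \{y_0\} = P \sqcup Q$ must be used carefully. Once that reaching-the-remainder claim is secured, gluing along the connected set $Y \setminus X$ to contradict the separation should be routine.
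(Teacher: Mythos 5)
Your plan is structurally the same as the paper's proof (split on whether the point lies in $X$ or in $Y\setminus X$; use Theorem \ref{52} to push component closures into the remainder; glue along the connected set $Y\setminus X$), but there is a genuine gap at exactly the step you flag as the ``main obstacle,'' and you misdiagnose what could go wrong there. The claim that $\cl_Y(C)\cap(Y\setminus X)\neq\varnothing$ for each component $C$ of $X\setminus\{y_0\}$ is not something that needs a delicate ruling-out of ``closure points lying in other components'': that scenario is impossible, because $C$, being a connected component of $X\setminus\{y_0\}$, is \emph{closed} in $X\setminus\{y_0\}$, so $\cl_Y(C)\cap X=\cl_X(C)\subseteq C\cup\{y_0\}$. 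This observation is the whole point, and it immediately closes the gap: if $\cl_Y(C)\subseteq X$, then $C\subseteq\cl_Y(C)\subseteq C\cup\{y_0\}$, so $C\cup\{y_0\}=\cl_Y(C)\cup\{y_0\}$ is the union of a compact set (a closed subset of the compact space $Y$) and a single point, hence compact, contradicting Theorem \ref{52}. Therefore $\cl_Y(C)$ must meet $Y\setminus X$. Without this argument your proof does not go through, since non-compactness of $C\cup\{y_0\}$ alone says nothing about where the extra closure points live.

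Two smaller repairs are also needed. First, the ``placement of $y_0$'' issue is resolved by working with $\cl_Y(C)\setminus\{y_0\}$ rather than $\cl_Y(C)$: this set contains $C$ as a dense connected subset, hence is connected, meets $Y\setminus X$, and avoids $y_0$; then $(Y\setminus X)\cup\bigcup_C\bigl(\cl_Y(C)\setminus\{y_0\}\bigr)=Y\setminus\{y_0\}$ is connected, which is the paper's direct conclusion (no separation $P\sqcup Q$ is needed). Second, in your case $y_0\in Y\setminus X$, the assertion that $(Y\setminus X)\setminus\{y_0\}$ lies entirely in $P$ or in $Q$ is unjustified---deleting a point from a connected set can disconnect it---but that case needs no appeal to connectedness of the remainder at all: $X$ is a dense connected subset of $Y\setminus\{y_0\}$, and $P\cap X$, $Q\cap X$ would be disjoint nonempty (by density) relatively open sets covering $X$, contradicting connectedness of $X$. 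This is precisely the paper's one-line treatment of remainder points.
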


\begin{proof}Suppose $Y\setminus X$ is connected.  For each   $p\in Y\setminus X$, the set $Y\setminus \{p\}$ is connected because it has dense connected subset $X$.  Now let $x\in X$. By Theorem \ref{52},  $(\cl_{Y}C)\setminus X\neq\varnothing$ for  each connected component $C$ of $X\setminus \{x\}$.  Since $Y\setminus X$ is connected, this implies $Y\setminus \{x\}$ is connected. \end{proof}

\begin{uc}The one-point compactification of a locally compact cut-point space has no cut points. \end{uc}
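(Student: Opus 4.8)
The plan is to derive this corollary directly from Corollary \ref{53} (the one about dense embeddings into compact spaces with connected remainder), since the one-point compactification is the archetypal case where the remainder is a single point, hence trivially connected. Let $X$ be a locally compact cut-point space and let $Y=X\cup\{\infty\}$ denote its one-point compactification. First I would observe that $X$ is connected, being a cut-point space, and that $X$ is dense in $Y$ because $X$ is not compact: indeed by Theorem \ref{52}, taking any cut point $x$ and any component $C$ of $X\setminus\{x\}$, the set $C\cup\{x\}$ is non-compact, so $X$ itself cannot be compact, and therefore $\{\infty\}$ is not isolated and $X$ is dense.

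The key step is to identify $Y\setminus X=\{\infty\}$ as the remainder and note that a one-point set is automatically connected. Thus the hypotheses of Corollary \ref{53} are satisfied: $X$ is a cut-point space densely embedded in the compact space $Y$, and $Y\setminus X$ is connected. Applying Corollary \ref{53} immediately yields that $Y$ has no cut points, which is exactly the claim.

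The only point requiring any care, and the step I expect to be the mildest obstacle, is confirming that the one-point compactification of a locally compact space is genuinely a compact space in the sense demanded by Corollary \ref{53}, without assuming Hausdorffness. Since the preceding four results (Theorem \ref{52} through Corollary \ref{53}) are explicitly noted to require no separation axioms, I need only the standard fact that the Alexandroff one-point compactification $Y=X\cup\{\infty\}$ of any locally compact space is compact, with the open sets being the open subsets of $X$ together with complements in $Y$ of compact closed subsets of $X$. This construction does not require Hausdorffness of $X$, so the corollary applies in full generality. With that verified, the entire argument collapses to a one-line invocation of Corollary \ref{53} applied to the connected (singleton) remainder.
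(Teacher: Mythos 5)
Your route is exactly the one the paper intends: the corollary is stated immediately after Corollary \ref{53} with no separate proof, precisely because it is the special case of a one-point (hence connected) remainder, and your verification that the Alexandroff construction works without Hausdorffness is the right thing to check in this separation-axiom-free section. However, one inference in your density argument is a non sequitur as written: from ``$C\cup\{x\}$ is non-compact'' you conclude ``$X$ itself cannot be compact,'' but a non-compact subset can perfectly well sit inside a compact space (e.g.\ $(0,1)\subset[0,1]$); only \emph{closed} subsets inherit compactness. The repair is short, and there are two natural ways to do it. Either invoke the non-cut-point existence theorem of Honari and Bahrampour \cite{poo} (the first theorem of Section 4, valid for arbitrary topological spaces): if $X$ were compact, then being connected with more than one point it would have at least two non-cut points, contradicting that every point of $X$ is a cut point. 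Or stay with Theorem \ref{52}: as observed in its proof, $C$ is closed in the subspace $X\setminus\{x\}$, so $\cl_X C\in\{C,\,C\cup\{x\}\}$; if $X$ were compact, then in either case $C\cup\{x\}$ would be compact (a closed subset of a compact space, possibly together with one extra point, and a finite union of compact sets is compact), contradicting Theorem \ref{52}. With that one step patched, the rest of your argument --- $\{\infty\}$ is not open when $X$ is non-compact, so $X$ is dense in $Y$, the remainder is a singleton and hence connected, and Corollary \ref{53} applies verbatim --- is correct.
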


To prove the next theorem, we use the fact that every connected separable metrizable space has a metrizable compactification with path-connected remainder.  This was proved by Jan J. Dijkstra in \cite{po}. 

\begin{ut}\label{54}Every separable metrizable cut-point space densely embeds into a reducible metrizable continuum with no cut points.\end{ut}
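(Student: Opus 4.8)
The plan is to manufacture the desired compactification directly from Dijkstra's theorem and then verify reducibility by hand, since reducibility is the one required property that does not come for free from the other two.

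First I would recall that a cut-point space is connected by definition, so $X$ is a connected separable metrizable space. Applying Dijkstra's result \cite{po}, I obtain a metrizable compactification $Y$ of $X$ whose remainder $R:=Y\setminus X$ is path-connected. Since $X$ is connected and dense in $Y$, the space $Y$ is a metrizable continuum, and since $R$ is in particular connected, Corollary \ref{53} (the statement that a compactification of a cut-point space with connected remainder has no cut points) immediately yields that $Y$ has no cut points. It then remains only to show that $Y$ is reducible, i.e.\ that every pair of points of $Y$ lies in some proper subcontinuum.

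The verification of reducibility splits according to where the two points live. If both points $a,b$ lie in $R$, I would join them by a path in $R$; its image is a subcontinuum of $Y$ contained in $R$, and since $X\neq\varnothing$ is dense we have $R\subsetneq Y$, so this subcontinuum is proper and contains $a$ and $b$. If instead at least one point, say $a$, lies in $X$, then I would exploit that $a$ is a cut point: write $X\setminus\{a\}=U\sqcup V$ with $U,V$ nonempty, open, and disjoint. Both $U\cup\{a\}$ and $V\cup\{a\}$ are connected (the standard cut-point fact used already in Lemma \ref{43}), so $\cl_Y U$ and $\cl_Y V$ are subcontinua of $Y$, each containing $a$. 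Using $\cl_Y U\cap X=\cl_X U\subseteq U\cup\{a\}$, and symmetrically for $V$, each of these subcontinua omits a nonempty subset of $X$ and is therefore proper. Finally, since $U\cup V=X\setminus\{a\}$ is dense in $Y$, we have $\cl_Y U\cup\cl_Y V=Y$, so the second point $b$ belongs to one of them; that one is the required proper subcontinuum containing both $a$ and $b$.

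The main obstacle is precisely this last property: reducibility is \emph{not} a consequence of having no cut points, since indecomposable continua such as the pseudo-arc have no cut points yet are irreducible. The argument must therefore genuinely use both the path-connectedness of the remainder (for pairs inside $R$) and the cut-point structure of $X$ (for pairs meeting $X$). The remaining points to nail down are routine: that $U\cup\{a\}$ is connected, that $\cl_Y U\cap X=\cl_X U$ (the subspace-closure identity), and that $X\setminus\{a\}$ stays dense in $Y$ because a cut point is never isolated. These are the small verifications I would check to make the case analysis airtight.
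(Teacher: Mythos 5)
Your proposal is correct and follows essentially the same route as the paper: invoke Dijkstra's theorem to get a metrizable compactification with path-connected remainder, apply Corollary \ref{53} to rule out cut points, and then prove reducibility by cases, using an arc in the remainder for pairs of remainder points and the closure of one side of the cut $X\setminus\{a\}=U\sqcup V$ for pairs meeting $X$. Your version just makes explicit a few small steps the paper leaves implicit (that $\cl_Y U$ and $\cl_Y V$ are proper, and that their union is all of $Y$ so the second point lands in one of them), which is exactly how the paper's ``without loss of generality, $q\in\cl_{\gamma X}V$'' is justified.
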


\begin{proof}Let $X$ be a separable metrizable cut-point space.  By  \cite[Theorem 1]{po},  there is a metrizable compactification $\gamma X$  such that $\gamma X\setminus X$ is path-connected.  By Corollary \ref{53}, $\gamma X$ has no cut points. 

It remains to show $\gamma X$ is reducible.  To that end, let $p,q\in \gamma X$. We will assume $p\neq q$, and exhibit a proper subcontinuum of $\gamma X$ containing $p$ and $q$.  If $p,q\in \gamma X\setminus X$, then there is an arc $A\subseteq \gamma X\setminus X$ with $p,q\in A$. 

Now suppose $p\in X$ or $q\in X$.  Assume $p\in X$, and write $X\setminus \{p\}=U\sqcup V$.  Without loss of generality, $q\in \cl_{\gamma X} V$.  Then $\cl_{\gamma X} (\{p\}\cup V)$ is a proper subcontinuum of $Y$ containing $p$ and $q$.  \end{proof}

%By a \textit{compactification} of a Tychonoff space $X$, we shall mean a compact Hausdorff space containing a dense homeomorphic copy of $X$. 

The following example shows Theorem \ref{54} does not generalize to Tychonoff spaces.

\begin{ue}\label{ex2}Let $[0,\omega_1)$ denote the $\omega_1$-long line, which is defined as $\omega_1 \times [0,1)$ in the lexicographic order topology.  Endow $A:=[0,\omega_1)\times (\{0\}\cup \{1/n:n=1,2,3,...\})$ with  the product topology. Then the  locally connected fan  $$X:=A/\{\langle x,y\rangle\in A:x=0\text{ or }y=0\}$$ is a Tychonoff cut-point space.  

Define $\overline{X}$  similarly, with  $B:=[0,\omega_1]\times (\{0\}\cup \{1/n:n=1,2,3,...\})$ in the place of $A$.  Here $[0,\omega_1]=[0,\omega_1)\cup \{\omega_1\}$   denotes the one-point compactification of $[0,\omega_1)$. 

If $f$ is any continuous real-valued function on $X$, then $f\restriction [0,\omega_1)\times \{1/n\}$ is eventually constant.  We observe that  $f$ continuously extends $\overline X $ by  mapping $\langle \omega_1,1/n\rangle$ to the eventually constant value of $f\restriction [0,\omega_1)\times \{1/n\}$.  So $\overline X =\beta X$. Thus if $\gamma X$  any compactification of $X$, then there is a continuous surjection $\beta \iota:\overline X \to \gamma X$ extending identity $\iota:X\to X$.  The function $\beta \iota$ is finite-to-one, and $\gamma X \simeq \{\beta \iota^{-1}\{p\}:p\in \gamma X \}$ in the quotient topology. Thus, $\gamma  X$ is obtained from $\overline X $ by collapsing finite subsets of $\{\langle \omega_1,1/n\rangle:n=1,2,3,...\}$. We see now  that $\gamma  X\setminus  X\simeq \omega$ and $\gamma  X\setminus \{\langle 0,0\rangle\}$ has infinitely many connected components.  In particular,  $\langle 0,0\rangle$ is a cut point of $\gamma X$.  \end{ue}

 %\begin{figure}[h]
%  \centering
 %  \includegraphics[scale=.45,trim={0 -.75mm 0 0},clip]{}\hspace{.2in}
 %   \includegraphics[scale=.45,trim={0 1.35mm 0 0},clip]{}
 % \includegraphics[scale=.45,trim={0 1.35mm 0 0},clip]{}\hspace{.3in}
 %  \caption{$A/(\{0\}\times \mathbb R)$, $ X$, and $\overline X $}
 %\end{figure}

We say that a cut point $x\in X$  is  \textit{persistent} if $x$  is a cut point of every compactification of $X$.  In Example \ref{ex2}, $\langle 0,0\rangle$ is a persistent cut point of $X$. All other cut points of $X$ are non-persistent. To see this, take  $\overline X $ and for each $n=1,2,3,...$ glue together  the two points $\langle \omega_1,1/(2n-1)\rangle$ and $\langle \omega_1,1/(2n)\rangle$. The resulting continuum has only one cut-point: $\langle 0,0\rangle$.

\begin{ut}\label{58}Let $X$ be a locally connected Tychonoff cut-point space.   If $x\in X$ has a compact neighborhood, then $x$ is non-persistent. \end{ut}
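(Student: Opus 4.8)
The plan is to produce a single compactification $\gamma X$ in which the pieces of $X\setminus\{x\}$ get reconnected through the remainder, so that $\gamma X\setminus\{x\}$ is connected and hence $x$ is not a cut point of $\gamma X$. First I would pin down the local picture at $x$. Since $X$ is Tychonoff, $\{x\}$ is closed, and since $X$ is a locally connected cut-point space in which $x$ has a compact neighborhood, Corollary \ref{53} gives that $X\setminus\{x\}$ has only finitely many components $C_1,\dots,C_n$; each $C_i$ is open by local connectedness, and one checks $\cl_X C_i=C_i\cup\{x\}$. By Theorem \ref{52} every $C_i\cup\{x\}$ is non-compact.

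Next I would fix a compactification, say $\beta X$, with remainder $R=\beta X\setminus X$. The non-compactness of $\cl_X C_i=C_i\cup\{x\}$ forces $\cl_{\beta X}C_i\cap R\neq\varnothing$ (otherwise $\cl_{\beta X}C_i$ would be a compact set equal to $\cl_X C_i$), so I may choose a remainder point $r_i\in\cl_{\beta X}C_i\cap R$ for each $i\leq n$. I then let $\gamma X$ be the quotient of $\beta X$ obtained by identifying the finite set $\{r_1,\dots,r_n\}$ to a single point $r^\ast$, with quotient map $q\colon\beta X\to\gamma X$. Because this collapses a finite, hence closed, subset of $R$, the decomposition is upper semicontinuous and $\gamma X$ is compact Hausdorff; and since the collapsed set is disjoint from $X$, the map $q\restriction X$ is a dense embedding, so $\gamma X$ is a compactification of $X$.

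Finally I would show $x$ is not a cut point of $\gamma X$. One has $r^\ast=q(r_i)\in q(\cl_{\beta X}C_i)=\cl_{\gamma X}C_i$ for every $i$, so each $C_i\cup\{r^\ast\}$ is connected (a connected set together with one of its limit points), and all of these sets contain $r^\ast$. Hence $S:=(X\setminus\{x\})\cup\{r^\ast\}=\bigcup_{i\leq n}\bigl(C_i\cup\{r^\ast\}\bigr)$ is connected. Since $\gamma X=\{x\}\cup\bigcup_{i\leq n}\cl_{\gamma X}C_i$, we get $\gamma X\setminus\{x\}\subseteq\cl_{\gamma X}(X\setminus\{x\})\subseteq\cl_{\gamma X}S$, so $S$ is a dense connected subset of $\gamma X\setminus\{x\}$; as any space with a dense connected subset is connected, $\gamma X\setminus\{x\}$ is connected. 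Thus $x$ is not a cut point of $\gamma X$, i.e. $x$ is non-persistent.

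The step I expect to be the main obstacle, and the one place where the compact-neighborhood hypothesis is genuinely used, is guaranteeing that the quotient $\gamma X$ is Hausdorff. This works precisely because there are \emph{finitely} many components, so only finitely many remainder points are collapsed to $r^\ast$. For infinitely many components the chosen points $r_i$ may accumulate at $x$ itself — exactly the phenomenon in Example \ref{ex2}, where the remainder points $\langle\omega_1,1/n\rangle$ converge to the persistent cut point $\langle 0,0\rangle$ — and collapsing their closure would collapse a point of $X$, destroying the embedding. So the entire argument hinges on the finiteness of the component set supplied by Corollary \ref{53}.
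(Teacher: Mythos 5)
Your proof is correct and follows essentially the same route as the paper: invoke Corollary \ref{53} to get finitely many components, use Theorem \ref{52} to find a remainder point of $\beta X$ in the closure of each component, and collapse those finitely many points to a single point to obtain a compactification in which $x$ is no longer a cut point. The paper states this in three sentences; your write-up simply supplies the verifications (Hausdorffness of the quotient, preservation of the embedding, connectedness of $\gamma X\setminus\{x\}$ via the dense connected set $S$) that the paper leaves implicit.
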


\begin{proof}
By Corollary \ref{53}, $X\setminus \{x\}$ has only finitely many components $C_0,C_1,...,C_{n-1}$.  By Theorem \ref{52}, for each $i<n$ there exists $p_i\in [\cl_{\beta X}C_i]\setminus X$. The quotient $\beta X/\{p_i:i<n\}$ is a compactification of $X$ in which $x$ is a non-cut point. \end{proof}

\begin{uq}Does every Tychonoff cut-point space have a non-persistent cut point?\end{uq} 

A positive answer to Question 1 could be viewed as a generalization of the non-cut point existence theorem for Hausdorff continua, since each cut point of a continuum is persistent. %Theorem \ref{58} shows the answer to Question 1 yes when $X$ is locally connected. %Questions 3(a) and 3(b) restricted to locally connected spaces are also of interest.

\end{document}